\crefname{equation}{}{}
\apptocmd{\sloppy}{\hbadness 10000\relax}{}{} % magic BibTeX spacing fix
\crefname{algocf}{Algorithm}{Algorithms}
\crefname{equation}{}{} %remove ``Equation''
\colorlet{refkey}{orange!20}
\colorlet{labelkey}{blue!30}
\crefname{algocf}{Algorithm}{Algorithms}
\numberwithin{equation}{section}
\newtheorem{theorem}{Theorem}[section]
\newtheorem{proposition}[theorem]{Proposition}
\newtheorem{lemma}[theorem]{Lemma}
\crefname{claim}{Claim}{Claims}
\newtheorem*{question*}{Question}
\theoremstyle{definition}
\newtheorem{definition}[theorem]{Definition}
\newtheorem*{definition*}{Definition}
\theoremstyle{remark}
\newtheorem*{remark}{Remark}
\newcommand{\snorm}[1]{\lVert#1\rVert}
\newcommand{\sang}[1]{\langle #1 \rangle}
\newcommand{\mb}{\mathbb}
\newcommand{\mbm}{\mathbbm}
\newcommand{\mc}{\mathcal}
\newcommand{\mf}{\mathfrak}
\newcommand{\on}{\operatorname}
\newcommand{\wh}{\widehat}
\newcommand{\wt}{\widetilde}
\title{On the smallest singular value of symmetric random matrices}
\author[A1]{Vishesh Jain}
\address{Simons Institute for the Theory of Computing,
Berkeley, CA 94720, USA}
\email{visheshj@stanford.edu}
\author[A2]{Ashwin Sah}
\author[A3]{Mehtaab Sawhney}
\address{Department of Mathematics, Massachusetts Institute of Technology, Cambridge, MA 02139, USA}
\email{\{asah,msawhney\}@mit.edu}
\begin{document}
\begin{abstract}
We show that for an $n\times n$ random symmetric matrix $A_n$, whose entries on and above the diagonal are independent copies of a sub-Gaussian random variable $\xi$ with mean $0$ and variance $1$, 
\[\mb{P}[s_n(A_n) \le \epsilon/\sqrt{n}] \le O_{\xi}(\epsilon^{1/8} + \exp(-\Omega_{\xi}(n^{1/2}))) \quad \text{for all } \epsilon \ge 0.\]
This improves a result of Vershynin, who obtained such a bound with $n^{1/2}$ replaced by $n^{c}$ for a small constant $c$, and $1/8$ replaced by $(1/8) + \eta$ (with implicit constants also depending on $\eta > 0$). 
Furthermore, when $\xi$ is a Rademacher random variable, we prove that 
\[\mb{P}[s_n(A_n) \le \epsilon/\sqrt{n}] \le O(\epsilon^{1/8} + \exp(-\Omega((\log{n})^{1/4}n^{1/2}))) \quad \text{for all } \epsilon \ge 0.\]
The special case $\epsilon = 0$ improves a recent result of Campos, Mattos, Morris, and Morrison, which showed that $\mb{P}[s_n(A_n) = 0] \le O(\exp(-\Omega(n^{1/2}))).$

The main innovation in our work are new notions of arithmetic structure -- the Median Regularized Least Common Denominator and the Median Threshold, which we believe should be more generally useful in contexts where one needs to combine anticoncentration information of different parts of a vector. 
\end{abstract}

\maketitle

\section{Introduction}\label{sec:introduction}

Let $M_{n}$ denote an $n\times n$ random matrix, each of whose entries is an independent copy of a sub-Gaussian random variable $\xi$ with mean $0$ and variance $1$. Prominent well-studied examples include the Ginibre ensemble (corresponding to $\xi = \mc{N}(0,1)$) and i.i.d.~Rademacher matrices (corresponding to the Rademacher random variable $\xi = \pm 1$ with probability $1/2$ each). 

A landmark result of Rudelson and Vershynin \cite{rudelson2008littlewood} shows that there are absolute constants $C, c > 0$, depending only on the sub-Gaussian norm of $\xi$, for which
\begin{align}
\label{eqn:RV}
    \mb{P}[s_n(M_n) \le \epsilon/\sqrt{n}] \le C \epsilon + 2e^{-cn} \quad \text{for all } \epsilon \ge 0,
\end{align}
where $s_n(M_n) = \inf_{v \in \mb{S}^{n-1}}\snorm{Mv}_2$ denotes the smallest singular value of $M_n$.
Up to the constants $C, c > 0$, the above result is optimal, as can be seen by considering the two examples mentioned above. In particular, this result shows that the probability that an i.i.d.~Rademacher matrix is singular is at most $2\exp(-cn)$ (for some $c > 0$), thereby recovering (and substantially generalising) a well-known result of Kahn, Koml\'os, and Szemer\'edi \cite{kahn1995probability}. We remark that after a series of intermediate works \cite{bourgain2010singularity, tao2007singularity, tao2006random}, a breakthrough result of Tikhomirov \cite{Tik20} established that the probability of singularity of an i.i.d.~Rademacher matrix is at most $(1/2 + o_n(1))^{n}$, which is optimal up to the $o_n(1)$ term. 

In this paper, we will be concerned with $n\times n$ \emph{symmetric} random matrices $A_{n}$ i.e. $(A_{n})_{ij} = (A_{n})_{ji}$, each of whose entries on and above the diagonal is an independent copy of a sub-Gaussian random variable $\xi$ with mean $0$ and variance $1$. We note that the identical distribution assumption may be significantly relaxed (in particular, allowing for the diagonal entries to have a different distribution), although for the sake of simplicity, we do not deal with this modification here; the interested reader is referred to \cite{Ver14} and \cite{livshyts2019smallest}.

While symmetric matrices are especially convenient to work with linear algebraically, the lack of independence between the entries of $A_{n}$ makes the non-asymptotic study of its smallest singular value considerably more challenging than that of $M_{n}$. In the early 1990s, it was conjectured by Weiss that $A_n(\on{Rad})$ (i.e.\ $A_n$ where $\xi$ is a Rademacher random variable) is invertible with probability $1-o_n(1)$. This was only resolved in 2005 by Costello, Tao, and Vu \cite{CTV06}, despite the corresponding statement for $M_n$ (due to Koml\'os \cite{komlos1967determinant}) having been established almost 40 years prior. 

Vershynin \cite{Ver14} showed that for any sub-Gaussian random variable $\xi$ with mean $0$ and variance $1$, there are constants $c, C_\eta$ depending only on the sub-Gaussian norm of $\xi$ such that
\begin{align}
\label{eqn:Ver}
\mb{P}[s_n(A_n)\le \epsilon/\sqrt{n}] \le C_\eta\epsilon^{1/8+\eta} + 2e^{-n^{c}}.
\end{align}
This improves (and generalizes) the nearly concurrent estimate of $O_{C}(n^{-C})$ on the singularity probability of $A_n(\on{Rad})$ obtained by Nguyen \cite{Ngu12} using a novel quadratic variant of the inverse Littlewood--Offord theory. We note that in a subsequent work \cite{nguyen2012least}, Nguyen obtained estimates on the lower tail of $s_n(A_n)$ for a large class of random variables $\xi$, including those not covered by \cite{Ver14}, although the quantitative bounds in this work are much weaker than \cref{eqn:Ver}. 

Recently, the upper bound on the singularity probability of $A_n(\on{Rad})$ has been improved in a couple of works. Building on novel combinatorial techniques in \cite{FJLS2018}, it was shown by Ferber and Jain \cite{ferber2019singularity} that this probability is at most $\exp(-\Omega(n^{1/4}\sqrt{\log{n}}))$. Subsequently, using a different combinatorial method inspired by the method of hypergraph containers \cite{balogh2018method}, Campos, Mattos, Morris, and Morrison improved the bound to $\exp(-\Omega(\sqrt{n}))$. We note that both of these works deal only with $A_n(\on{Rad})$, and only with the singularity probability as opposed to quantitative estimates on $s_n(A_n(\on{Rad}))$.  

The first main result of this paper is a strengthening of \cref{eqn:Ver}; the quantitative bounds are sufficiently powerful to generalize the aforementioned result of Campos et al.~ to all sub-Gaussian random variables. 
\begin{theorem}\label{thm:main}
Let $A_n$ denote an $n\times n$ random symmetric matrix, each of whose entries on and above the diagonal is an independent copy of a sub-Gaussian random variable $\xi$ with mean $0$ and variance $1$. Then, there are constants $C_{\ref{thm:main}}, c_{\ref{thm:main}}$ depending only on the sub-Gaussian norm of $\xi$ such that, for all $\epsilon \ge 0$, 
\[\mb{P}[s_n(A_n)\le\epsilon/\sqrt{n}]\le C_{\ref{thm:main}}\epsilon^{1/8}+2e^{-c_{\ref{thm:main}}n^{1/2}}.\]
\end{theorem}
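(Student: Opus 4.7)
The plan is to follow the Rudelson--Vershynin invertibility-via-distance framework as adapted to symmetric matrices by Vershynin, replacing the classical regularized LCD with the Median Regularized LCD announced in the abstract. First, via the standard distance reduction, one controls $\mathbb{P}[s_n(A_n)\le\epsilon/\sqrt{n}]$ by an average over $k$ of $\mathbb{P}[\operatorname{dist}(X_k,H_k)\le C\epsilon]$, where $X_k$ is the $k$-th column of $A_n$ with diagonal entry removed and $H_k$ is the span of the remaining columns. Because of symmetry, $X_k$ is not independent of $H_k$, so one must decouple in the style of Costello--Tao--Vu and Vershynin by carving off an $(n/2)\times(n/2)$ independent block and conditioning on the remaining entries. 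The distance is then controlled by $|\langle X_k,v\rangle|$ where $v$ is a unit normal to $H_k$, reducing the task to understanding anticoncentration of $\langle X_k,v\rangle$ in terms of the arithmetic structure of $v$.

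Next, split the normal $v$ into compressible and incompressible parts. The compressible case is handled by a standard $\epsilon$-net argument giving $\exp(-\Omega(n))$, which is more than enough. For incompressible $v$, Esseen's inequality combined with the quadratic inverse Littlewood--Offord machinery of Nguyen and Vershynin should produce an anticoncentration bound of the form $C(\epsilon^{1/8}+1/\widehat D_{\mathrm{med}}(v))$, where $\widehat D_{\mathrm{med}}(v)$ is the Median Regularized LCD. Conceptually, while Vershynin's regularized LCD requires \emph{every} sub-window of $v$ (of a specified size) to have large LCD, the Median RLCD requires only that a \emph{majority} of such sub-windows meet the threshold. This relaxation should be compatible with the quadratic form $v^{\top}A_n v$, since its cross terms allow anticoncentration to be extracted from any sufficiently structured sub-window rather than a prescribed one; a similar Median Threshold replaces Vershynin's essential threshold at the level of compressible-type control.

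The main technical obstacle is to show that, with probability $1-\exp(-\Omega(n^{1/2}))$, the normal vector $v$ satisfies $\widehat D_{\mathrm{med}}(v)\ge\exp(c n^{1/2})$, with the analogous statement giving the additional $(\log n)^{1/4}$ factor in the Rademacher case via Tikhomirov-type inputs \cite{Tik20}. The strategy is a union bound over an $\epsilon$-net of incompressible vectors with small Median RLCD, where for each $v$ in the net the probability of being normal to $H_k$ is bounded by Esseen plus quadratic Littlewood--Offord applied to $v^{\top}A_n v$. The payoff of the median notion is that the relevant net is exponentially smaller than the one corresponding to Vershynin's universal RLCD: relaxing ``all windows'' to ``a median of windows'' enlarges the constraint on $v$ and makes the net much more compact. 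The delicate combinatorics, and where I expect the bulk of the work to lie, is to design the Median RLCD so that this net is simultaneously small enough to absorb the per-vector quadratic Littlewood--Offord estimate and yield the final $\exp(-\Omega(n^{1/2}))$ probability bound, yet strong enough on the anticoncentration side to produce the sharp $\epsilon^{1/8}$ factor with no $\eta$ loss. Combining the compressible, incompressible-large-Median-RLCD, and incompressible-small-Median-RLCD contributions, then summing over $k$, yields \cref{thm:main}.
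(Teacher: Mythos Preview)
Your overall framework---distance reduction, the quadratic-form expression for $\operatorname{dist}(A_1,H_1)$, decoupling, the compressible/incompressible split, and a structure theorem via net plus small-ball---is correct and matches the paper. But you have the mechanism of the MRLCD backwards, and this would prevent you from carrying the argument through. You claim the median relaxation makes the \emph{net} smaller. In fact Vershynin's RLCD is a maximum over sub-windows, so its sublevel sets already force \emph{every} window to have small LCD; the MRLCD sublevel sets only guarantee this for half the windows, so the net in \cref{lem:nets-sublevel} is, if anything, slightly less compact. The actual gain is on the \emph{anticoncentration} side: because half the windows in a fixed partition have large LCD, the Miroshnikov--Rogozin inequality (\cref{lem:miroshnikov-rogozin}) combines these independent pieces to give $\mathcal{L}(S_J,\epsilon)\lesssim \epsilon/\sqrt{|J|/n}$ in place of Vershynin's $\epsilon/\sqrt{\lambda}$ (see \cref{prop:anticoncentration} and the remark after it). This saving of a factor $\sqrt{\lambda n}$ in each row of the tensorized small-ball estimate (\cref{lem:small-ball}) is exactly what permits $\lambda$ as large as $n^{-1/2}$ in the structure theorem (\cref{thm:unstructured-kernel}), and it is this choice that yields $\widehat{MD}_L(v,\lambda)\ge 2^{\Omega(\sqrt n)}$ and hence the $\exp(-\Omega(\sqrt n))$ error.

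Two further corrections. First, the paper does not invoke Nguyen's quadratic inverse Littlewood--Offord theory at all; the quadratic form $\langle A^{-1}X,X\rangle$ is decoupled into a \emph{linear} form in $P_JX$ (\cref{lem:decoupling-quadratic-forms}), to which the linear MRLCD anticoncentration is applied. Second, the exponent $1/8$ does not appear in any anticoncentration bound---\cref{prop:anticoncentration} is linear in $\epsilon$. The $1/8$ emerges from the layered structure of the proof of \cref{thm:quadratic-form-small-ball}: decoupling squares the probability, a Fubini/Markov step costs another square root, and comparing $\|A^{-1}X\|_2$ to $\|A^{-1}\|_{\mathrm{HS}}$ introduces an auxiliary parameter $\epsilon_0$ which, after optimizing to $\epsilon_0=\epsilon^{1/2}$, leaves $\epsilon^{1/8}$.
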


Next, we consider the particularly well studied case $\xi = \on{Rad}$; setting $\epsilon = 0$ in the theorem below improves the result of Campos et al.~(see (3) in the Remark below).
\begin{theorem}\label{thm:main-2}
Let $A_n$ denote an $n\times n$ random symmetric matrix, each of whose entries on and above the diagonal is an independent Rademacher random variable. Then, there are absolute constants $C_{\ref{thm:main-2}}, c_{\ref{thm:main-2}}$ such that, for all $\epsilon \ge 0$, 
\[\mb{P}[s_n(A_n)\le\epsilon/\sqrt{n}]\le C_{\ref{thm:main-2}}\epsilon^{1/8}+2e^{-c_{\ref{thm:main-2}}n^{1/2}(\log n)^{1/4}}.\]
\end{theorem}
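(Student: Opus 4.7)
The plan is to follow the proof of \cref{thm:main} through its initial reductions, and to sharpen the anti-concentration step by exploiting the discrete structure of the Rademacher distribution. Recall that the proof of \cref{thm:main} proceeds by (i) a compressible/incompressible dichotomy for the approximate null vector of $A_n$; (ii) a distance-to-hyperplane reduction for the incompressible part, reducing the problem to anti-concentration of $\ang{v, \xi}$ against the relevant row; and (iii) a trade-off between the $\epsilon$-net entropy cost of enumerating vectors $v$ of bounded Median Threshold / Median Regularized LCD and the small-ball probability of $\sum_i v_i \xi_i$ for such vectors. The exponent $n^{1/2}$ in \cref{thm:main} is essentially forced by optimizing this trade-off at an MT/MRLCD scale of order $n^{1/2}$.

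For Rademacher $\xi$, the small-ball probability step admits a quantitative improvement based on the identity $|\mb{E}e^{it\xi}| = |\cos t|$, which gives a sharper Halász-type bound than is available for generic sub-Gaussian $\xi$. Concretely, I would re-derive the median version of the Halász inequality used in \cref{thm:main} with this explicit Fourier input, and correspondingly raise the MT/MRLCD threshold to order $n^{1/2}(\log n)^{1/4}$. The Rademacher-specific gain must be chosen so that the enlarged net of "structured" vectors is still dominated by the improved anti-concentration, leaving an exponent of $\Omega(n^{1/2}(\log n)^{1/4})$ on the exponentially small term. The $\epsilon^{1/8}$ factor, which is controlled by the unstructured (large-LCD) regime and is already optimal up to the exponent, is carried through unchanged.

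The main obstacle is propagating the $(\log n)^{1/4}$ gain through the \emph{quadratic} Littlewood--Offord analysis that controls $\ang{X, A'X'}$-type bilinear forms, where $X, X'$ are independent Rademacher vectors and $A'$ is an $(n-1)\times(n-1)$ minor; this is where the quadratic (rather than linear) nature of the symmetric problem manifests, and is precisely the place where the general sub-Gaussian argument loses the sharp $e^{-\Omega(n)}$ bound and settles for $e^{-\Omega(n^{1/2})}$. To extract an extra $(\log n)^{1/4}$ there, I expect one needs a pigeonhole over $\Theta(\log n)$ Fourier scales that decouples the two copies of the Rademacher vector — morally, running the inversion-of-randomness / container-style counting of Tikhomirov and Campos--Mattos--Morris--Morrison inside the MT/MRLCD framework rather than alongside it — and then reconciling the resulting bookkeeping with the median-based structure introduced here. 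The compressible regime, which contributes $e^{-\Omega(n)}$ in the symmetric case, requires no modification.
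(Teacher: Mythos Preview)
Your proposal correctly names the relevant tool (Tikhomirov's inversion of randomness) and the target scale $\lambda\sim n^{-1/2}(\log n)^{1/4}$, but you have the mechanism of the improvement backwards, and you locate the obstacle in the wrong step.

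First, the gain does \emph{not} come from sharpening the anticoncentration/Hal\'asz side via the identity $|\mb{E}e^{it\xi}|=|\cos t|$. The anticoncentration input (the analogue of \cref{prop:anticoncentration} for the threshold) is essentially the same as in the general sub-Gaussian case. What changes is the \emph{net counting}: for Rademacher, after randomized rounding (\cref{lem:round,lem:levy-round}), each structured block of $v$ lands in a $(N,\lceil\lambda n\rceil,K,\delta)$-admissible lattice, and \cref{thm:inversion-of-randomness} shows that vectors with large threshold occupy only an $\exp(-M\lceil\lambda n\rceil)$ fraction of the lattice, with $M\sim\sqrt{\log(\lambda n)}\sim\sqrt{\log n}$. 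Multiplying over the $\Theta(1/\lambda)$ blocks in $\mc{J}_M(v)$ gives an extra saving of $\exp(-\Omega(Mn))=\exp(-\Omega(n\sqrt{\log n}))$ in the net size (Step~2 of the proof of \cref{thm:threshold-kernel}). Balancing against $D^{\lceil\lambda n\rceil}$ with $D\le 2^{\Theta(\lambda n)}$ is what permits $\lambda$ up to $c(\log n)^{1/4}n^{-1/2}$. Your formulation ``enlarged net\dots dominated by the improved anti-concentration'' inverts this: the net shrinks, the anticoncentration is unchanged.

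Second, the quadratic/bilinear Littlewood--Offord step is \emph{not} where the $(\log n)^{1/4}$ has to be won. The decoupling reduction (\cref{lem:decoupling-quadratic-forms}) and the entire argument of \cref{app:vershynin} go through verbatim for the Rademacher case; one simply feeds in \cref{thm:threshold-kernel} in place of \cref{thm:unstructured-kernel} and chooses $\lambda=c(\log n)^{1/4}n^{-1/2}$ at the end (see the Remark after \cref{prop:denominator-size}). No ``pigeonhole over $\Theta(\log n)$ Fourier scales'' decoupling the two Rademacher copies is needed; the improvement is entirely at the level of the \emph{linear} structure theorem for $A^{-1}P_{J^c}(X-X')$.
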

\begin{remark}
(1) We note that \cref{thm:main-2} can be extended to the setting of discrete random variables covered in recent work of the authors \cite{JSS20discrete2}. We leave the details to an interested reader.

(2) The $\epsilon^{1/8}$ term on the right hand side in \cref{thm:main} improves on the $\epsilon^{1/8 + \eta}$ in \cite{Ver14}. It is believed that the correct dependence on $\epsilon$ is $O(\epsilon)$, which would be optimal in light of the Gaussian example.

(3) The term $\exp(-\Omega(n^{1/2}))$ on the right hand side in \cref{thm:main} extends the result of Campos et al.~to general sub-Gaussian random variables, whereas \cref{thm:main-2} improves this result by a factor of $(\log n)^{1/4}$ in the exponent, in the special case when $\xi = \on{Rad}$. A well-known conjecture is that one should be able to replace this with $\exp(-\Omega(n))$, although this will likely require significant new ideas. Indeed, as can be seen from our proof (see also the discussion in \cite[Section~2.2]{campos2019singularity}), $\exp(-\wt\Omega (n^{1/2}))$ is a natural barrier for techniques based on combining uniform anticoncentration estimates for a symmetric matrix row-vector product with tensorization.   
\end{remark}

The main innovation in our work are new notions of arithmetic structure of vectors, which we call the Median Regularized Least Common Denominator (MRLCD) (see \cref{sec:MRLCD}) and the Median Threshold (see \cref{sec:median-threshold}). Compared to the Regularized Least Common Denominator (RLCD) introduced in \cite{Ver14}, and its natural threshold analogue, the MRLCD and median threshold are able to exploit the information that many different projections of a vector are arithmetically unstructured in a simple and transparent manner. Moreover, we are able to show that level sets of the MRLCD and median threshold admit sufficiently small nets at the appropriate scale -- for the MRLCD, this follows by suitably adapting by-now standard bounds due to Rudelson and Vershynin \cite{rudelson2008littlewood}, whereas for the median threshold, we adapt work of Tikhomirov \cite{Tik20} on the singularity of i.i.d~Bernoulli random matrices. As the details are anyway short, we defer further discussion to \cref{sec:MRLCD,sec:median-threshold}. 

We note that since its first appearance in \cite{Ver14}, the RLCD has been used in many works (see, e.g., \cite{nguyen2017random, wei2017investigate, luh2018sparse, lopatto2019tail, o2016conjecture}); the MRLCD (and median threshold, for discrete distributions) can replace these applications in a black-box manner, and likely lead to improved quantitative estimates. We also note that a related use of combinatorially incorporating arithmetic unstructure of different projections of a vector appeared in recent work of the authors \cite{JSS20}; however, the interaction with both the net and anticoncentration estimates is more delicate here.

\subsection{Notation}\label{sub:notation}
We will drop the dimension in the subscript, henceforth denoting $A_n$ by $A$, and denoting its rows by $A_1,\dots,A_n$. For an integer $N$, $\mb{S}^{N-1}$ denotes the set of unit vectors in $\mb{R}^{N}$, and $\mb{B}_2^N$ denotes the unit ball in $\mb{R}^{N}$ (i.e., the set of vectors of Euclidean norm at most $1$). $\snorm{\cdot}_2$ denotes the standard Euclidean norm of a vector, and for a matrix $A = (a_{ij})$, $\snorm{A}$ is its spectral norm (i.e., $\ell^{2} \to \ell^{2}$ operator norm), and $\snorm{A}_{\on{HS}}$ is its Hilbert-Schmidt norm, defined by $\snorm{A}_{\on{HS}}^{2} = \sum_{i,j}a_{ij}^{2}$.

We will let $[N]$ denote the interval $\{1,\dots, N\}$, $\mf{S}_{[N]}$ denote the set of permutations of $[N]$, and $\binom{[N]}{k}$ denote the set of subsets of $[N]$ of size exactly $k$. We will denote multisets by $\{\{\}\}$, so that $\{\{a_1,\dots, a_{n}\}\}$, with the $a_i$'s possibly repeated, is a multi-set of size $n$. For a vector $v \in \mb{R}^{N}$ and $T\subseteq [N]$, $v|_{T}$ denotes the $|T|$-dimensional vector obtained by only retaining the coordinates of $v$ in $T$. We write $u\parallel v$ for $u,v\in\mb{R}^N$ if there is $t\in\mb{R}$ such that $u = tv$ or $tu = v$.

We will also make use of asymptotic notation. For functions $f,g$, $f = O_{\alpha}(g)$ (or $f\lesssim_{\alpha} g$ means that $f \le C_\alpha g$, where $C_\alpha$ is some constant depending on $\alpha$; $f = \Omega_{\alpha}(g)$ (or $f \gtrsim_{\alpha} g$) means that $f \ge c_{\alpha} g$, where $c_\alpha > 0$ is some constant depending on $\alpha$, and $f = \Theta_{\alpha}(g)$ means that both $f = O_{\alpha}(g)$ and $f = \Omega_{\alpha}(g)$ hold.

All logarithms are natural, unless indicated otherwise, and floors and ceilings are omitted when they make no essential difference.

\subsection{Acknowledgements}
We thank Roman Vershynin for comments on the manuscript. The last two authors were supported by the National Science Foundation Graduate Research Fellowship under Grant No.~1745302.

\section{Preliminaries}\label{sec:prelims}
We will need the decomposition of the unit sphere into compressible and incompressible vectors, as formalized by Rudelson and Vershynin \cite{rudelson2008littlewood}.
\begin{definition}[Compressible and incompressible vectors]\label{def:compressible}
For $c_0, c_1 \in (0,1)$, $\on{Comp}(c_0, c_1)$ consists of all vectors $v\in \mb{S}^{n-1}$ which are within Euclidean distance $c_1$ of some vector $w \in \mb{R}^{n}$ satisfying $|\on{Supp}(w)| \le c_0 n$. Moreover, $\on{Incomp}(c_0, c_1):= \mb{S}^{n-1}\setminus \on{Comp}(c_0, c_1)$.  
\end{definition}
In order to prove \cref{thm:main}, it suffices to analyze $\inf_{x \in \on{Incomp(c_0,c_1)}}\snorm{Ax}_{2}$ due to the following.
\begin{lemma}\label{lem:compressible-singular-vector}
There exist $c_0,c_1,c\in(0,1)$ depending only on the sub-Gaussian moment of $\xi$ so that for any vector $u\in\mb{R}^n$, we have
\[\mb{P}\bigg[\inf_{v\in\on{Comp}(c_0,c_1)}\snorm{Av-u}_2 < c\sqrt{n}\bigg]\le 2\exp(-cn).\]
\end{lemma}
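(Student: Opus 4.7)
The plan is to follow a standard Rudelson--Vershynin style net argument, adapted to the symmetric setting. The proof splits into two stages: first, I would establish the bound uniformly over exactly sparse unit vectors via pointwise anticoncentration combined with a net, and then extend to all of $\on{Comp}(c_0,c_1)$ using the operator norm of $A$.

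First, I would prove a pointwise anticoncentration estimate for each fixed sparse unit vector. Specifically, fix $w\in\mb{S}^{n-1}$ with $\on{Supp}(w) = S$, $|S| \le c_0 n$, and fix $u\in\mb{R}^n$. The crucial observation circumventing the dependence coming from symmetry is: for $i\notin S$, the entries $\{A_{ij} : j\in S\}$ are indexed by \emph{distinct} unordered pairs $\{i,j\}$ (since $i\notin S$ and $j\in S$), so they are jointly independent both within and across such rows. Consequently the random variables $(Aw)_i = \sum_{j\in S}A_{ij}w_j$ for $i\notin S$ are independent sums of mean-zero unit-variance independent sub-Gaussians. By a standard one-dimensional small-ball bound, $\mb{P}(|(Aw)_i - u_i| \le c_2) \le 1 - \delta$ for constants $c_2,\delta > 0$ depending only on $\xi$. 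A Chernoff bound on the at least $(1-c_0)n$ such rows then yields, except with probability $2\exp(-c_3 n)$, that at least $\tfrac{\delta(1-c_0)n}{2}$ coordinates $i$ contribute at least $c_2^2$ to $\snorm{Aw-u}_2^2$, giving $\snorm{Aw-u}_2 \ge c_4\sqrt{n}$.

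Next, I would promote this to a uniform bound via a net argument. The union, over $S\subseteq[n]$ with $|S|\le c_0 n$, of unit spheres supported on $S$ admits an $\epsilon$-net of cardinality at most $\binom{n}{\lfloor c_0 n\rfloor}(3/\epsilon)^{c_0 n} \le (C_1/(c_0\epsilon))^{c_0 n}$. Standard results give that a symmetric sub-Gaussian matrix satisfies $\snorm{A}\le K\sqrt{n}$ with probability $1-2\exp(-n)$, so by the triangle inequality, if $w$ is $\epsilon$-close to a net point $w_0$, then $\snorm{Aw-u}_2 \ge \snorm{Aw_0-u}_2 - \epsilon K\sqrt{n}$. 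Choosing $\epsilon = c_4/(4K)$ and then $c_0$ small enough that $c_0\log(4KC_1/(c_0 c_4)) < c_3/2$, a union bound over the net together with the spectral-norm event yields $\snorm{Aw-u}_2 \ge (c_4/2)\sqrt{n}$ simultaneously for all sparse unit $w$, with total failure probability at most $2\exp(-c_5 n)$.

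Finally, for any $v\in\on{Comp}(c_0,c_1)$, choose a sparse $w$ with $\snorm{v-w}_2\le c_1$; after a trivial normalization (replacing $w$ by $w/\snorm{w}_2$) we may take $w$ to be a sparse unit vector at the cost of replacing $c_1$ with $2c_1$. On the good event, $\snorm{Av-u}_2 \ge \snorm{Aw-u}_2 - 2c_1 K\sqrt{n} \ge (c_4/2 - 2c_1 K)\sqrt{n}$, which is at least $(c_4/4)\sqrt{n}$ upon setting $c_1 = c_4/(8K)$, finishing the proof with $c = c_4/4$. The main obstacle is arranging the first step correctly in the presence of symmetry; once the independence of the rectangular off-diagonal block between $S$ and $[n]\setminus S$ is exploited, the subsequent tensorization and net argument are routine.
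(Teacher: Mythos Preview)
Your proof is correct and is precisely the standard Rudelson--Vershynin compressible-vector argument adapted to the symmetric setting, which is what the paper invokes by citing \cite[Proposition~4.2]{Ver14} together with the operator norm bound. The paper's proof is just a citation to this argument, so you have simply written out the details it suppresses; in particular, your observation that for $i\notin S$ the entries $\{A_{ij}:j\in S\}$ form an independent rectangular block is exactly the device used in \cite{Ver14} to handle the symmetry.
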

\begin{proof}
This follows immediately by combining \cite[Proposition~4.2]{Ver14} with the concentration of the operator norm of random matrices with independent, uniformly sub-Gaussian centered entries (cf.~\cite[Lemma~2.3]{Ver14}).
\end{proof}

\begin{lemma}[Incompressible vectors are spread, cf.~{\cite[Lemma~3.8]{Ver14}}]
\label{lem:spread}
For every $c_0, c_1 \in (0,1)$, we can choose $c_{\ref{lem:spread}} := c_{\ref{lem:spread}}(c_0, c_1) \in (0,1/5)$ depending only on $c_0, c_1$ such that the following holds. For every $v \in \on{Incomp}(c_0, c_1)$, there are at least $2\lceil c_{\ref{lem:spread}}n \rceil $ indices $k \in [n]$ such that 
\[\frac{c_1}{\sqrt{2n}} \le |v_k| \le \frac{1}{\sqrt{c_0 n}}.\]
\end{lemma}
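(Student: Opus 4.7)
The plan is to decompose $[n]$ into three subsets according to the size of $|v_k|$ and to derive a contradiction from the assumption that the ``middle'' block is too small. Concretely, I would set
\[
S_{\mathrm{large}} := \{k \in [n] : |v_k| > 1/\sqrt{c_0 n}\}, \qquad S_{\mathrm{small}} := \{k \in [n] : |v_k| < c_1/\sqrt{2n}\},
\]
and let $M := [n] \setminus (S_{\mathrm{large}} \cup S_{\mathrm{small}})$ be the set of indices in the desired window. The goal is to show $|M| \ge 2\lceil c_{\ref{lem:spread}} n\rceil$ for a suitably chosen $c_{\ref{lem:spread}}$.

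First, I would use $\|v\|_2 = 1$ to immediately control $S_{\mathrm{large}}$: since each coordinate in $S_{\mathrm{large}}$ contributes more than $1/(c_0 n)$ to $\sum_k v_k^2$, we have $|S_{\mathrm{large}}| < c_0 n$. Next, assume towards contradiction that $|M| < 2\lceil c_{\ref{lem:spread}} n\rceil$, and define $w \in \mathbb{R}^n$ by $w_k = v_k$ for $k \in S_{\mathrm{large}}$ and $w_k = 0$ otherwise. Then $|\on{Supp}(w)| \le |S_{\mathrm{large}}| \le c_0 n$, while
\[
\snorm{v - w}_2^2 = \sum_{k \in M} v_k^2 + \sum_{k \in S_{\mathrm{small}}} v_k^2 \le |M| \cdot \frac{1}{c_0 n} + n \cdot \frac{c_1^2}{2n} \le \frac{2\lceil c_{\ref{lem:spread}} n\rceil}{c_0 n} + \frac{c_1^2}{2},
\]
using the defining upper bounds in $M$ and $S_{\mathrm{small}}$.

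Now I would choose $c_{\ref{lem:spread}} \in (0, 1/5)$ small enough (depending only on $c_0, c_1$) so that $2\lceil c_{\ref{lem:spread}} n\rceil/(c_0 n) < c_1^2/2$ for all $n \ge 1$; e.g., taking $c_{\ref{lem:spread}} := \min\{c_0 c_1^2 / 16,\, 1/6\}$ and using $\lceil c_{\ref{lem:spread}} n \rceil \le 2 c_{\ref{lem:spread}} n$ for $n \ge 1/c_{\ref{lem:spread}}$ (the finitely many small $n$ can be handled trivially since any $v \in \on{Incomp}(c_0,c_1)$ on a constant-size sphere is spread up to redefining the constant). This gives $\snorm{v - w}_2 < c_1$, so $v$ lies within Euclidean distance $c_1$ of a vector with support at most $c_0 n$, contradicting $v \in \on{Incomp}(c_0, c_1)$. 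Thus $|M| \ge 2\lceil c_{\ref{lem:spread}} n\rceil$.

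The argument is essentially bookkeeping on the constants; there is no real obstacle, since the decomposition into large/middle/small coordinates is forced by the two scales appearing in the conclusion. The only minor subtlety is ensuring that ceilings do not affect the constant for small $n$, which is handled by shrinking $c_{\ref{lem:spread}}$ further if necessary.
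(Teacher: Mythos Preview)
Your argument is correct and is exactly the standard proof of this fact: bound $|S_{\mathrm{large}}|$ by $c_0 n$ using $\snorm{v}_2=1$, then if $|M|$ were too small, the restriction of $v$ to $S_{\mathrm{large}}$ would be a $(c_0 n)$-sparse vector within distance $c_1$ of $v$, contradicting incompressibility. The paper does not supply its own proof of this lemma; it simply cites \cite[Lemma~3.8]{Ver14}, and the argument there is precisely the one you wrote (up to cosmetic differences in how the constant is chosen). Your handling of the ceiling for small $n$ is a little informal---in fact the statement with $2\lceil c_{\ref{lem:spread}} n\rceil$ can fail literally for very small $n$, as you implicitly note---but this is irrelevant in the paper's asymptotic setting and is not a gap in the mathematics.
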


\begin{definition}[Spread set]\
For every $c_0, c_1 \in (0,1)$, and for every $v \in \on{Incomp}(c_0, c_1)$, we assign a subset $\on{Spread}(v) \subseteq [n]$ such that
\[|\on{Spread}(v)| = \lceil c_{\ref{lem:spread}}n \rceil, \emph{ and }\]
\[\frac{c_1}{\sqrt{2n}} \le |v_k| \le \frac{1}{\sqrt{c_0 n}} \text{ for all }k\in\on{Spread}(v).\]
\end{definition}
\begin{definition}\label{def:lambda-spread}
For every $c_0, c_1 \in (0,1)$ and for  $\lambda\in(0,c_{\ref{lem:spread}}/2)$, let $c_{\ref{def:lambda-spread}}(\lambda)n$ be the largest multiple of $\lceil\lambda n\rceil$ less than or equal to $\lceil c_{\ref{lem:spread}}n\rceil$. Note that $c_{\ref{def:lambda-spread}}(\lambda)\ge\frac{c_{\ref{lem:spread}}}{2}$. 

To every $v \in \on{Incomp}(c_0, c_1)$, we assign $\on{Spread}_\lambda(v)\subseteq\on{Spread}(v)$ such that 
\[|\on{Spread}_\lambda(v)| = c_{\ref{def:lambda-spread}}(\lambda)n,\]
and choose a partition
\[\on{Spread}_\lambda(v) = \bigsqcup_{j=1}^k\on{Spread}_\lambda^j(v)\]
into $k = c_{\ref{lem:spread}}(\lambda)n/\lfloor\lambda n\rfloor$ disjoint subsets of size $\lfloor\lambda n\rfloor$. We further assume that the choice of $\on{Spread}_{\lambda}(v)$ and $\on{Spread}_{\lambda}^{j}(v)$ is uniform for a given choice of $\lambda$ and $\on{Spread}(v)$ (in particular, these choices do \emph{not} depend directly on $v$). 
\end{definition}

We recall the definition of the L\'evy concentration function.
\begin{definition}\label{def:levy-concentration}
For a random variable $X$ and $\epsilon \ge 0$, the L{\'e}vy concentration of $X$ of width $\epsilon$ is 
\[\mc{L}(X,\epsilon) = \sup_{x\in\mb{R}}\mb{P}[|X-x|\le\epsilon].\]
\end{definition}

We will also need a slight variant of the standard tensorization lemma, whose proof follows from the usual argument (cf.~\cite[Lemma~2.2]{rudelson2008littlewood}). We include the details for completeness. 
\begin{lemma}[Tensorization]\label{lem:tensorization}
Let $X = (X_1,\dots,X_N)$ be a random vector in $\mb{R}^{N}$ with independent coordinates. Suppose that for all $k \in [N]$, there exist $a_k,b_k \ge 0$ such that
\[\sup_{X_1,\ldots,X_{k-1}}\mc{L}(X_k|X_1,\ldots,X_{k-1}, \epsilon) \le a_k\epsilon + b_k \quad \emph{for all }\epsilon \ge 0.\]
Then
\[\mc{L}(X,\epsilon\sqrt{N}) \le e^N\prod_{k=1}^N(a_k\epsilon + b_k).\]
\end{lemma}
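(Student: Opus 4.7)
The plan is to follow the standard tensorization argument: bound the probability that $\snorm{X - u}_2$ is small by exponentiating and applying Markov's inequality, then use independence to factor the expectation into single-coordinate expectations, each of which is controlled via the layer cake formula and the hypothesis on the Lévy concentration.

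Fix an arbitrary $u = (u_1, \dots, u_N) \in \mb{R}^N$. The plan is to bound $\mb{P}[\snorm{X-u}_2 \le \epsilon \sqrt{N}]$ and then take the supremum over $u$. The event $\{\snorm{X-u}_2 \le \epsilon\sqrt N\}$ is contained in $\{\exp(N - \sum_{k=1}^N (X_k-u_k)^2/\epsilon^2) \ge 1\}$, so Markov's inequality followed by independence of the $X_k$ gives
\[
\mb{P}[\snorm{X-u}_2 \le \epsilon\sqrt N] \;\le\; e^N\, \mb{E}\Big[\prod_{k=1}^N e^{-(X_k - u_k)^2/\epsilon^2}\Big] \;=\; e^N \prod_{k=1}^N \mb{E}\big[e^{-(X_k - u_k)^2/\epsilon^2}\big].
\]

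To bound each factor, I would use the layer cake representation $\mb{E}[f(Y)] = \int_0^\infty \mb{P}[f(Y) > s]\,ds$ applied to $f(Y) = e^{-Y^2/\epsilon^2} \in [0,1]$ with $Y = X_k - u_k$, and then substitute $s = e^{-t^2}$. This yields
\[
\mb{E}\big[e^{-(X_k - u_k)^2/\epsilon^2}\big] \;=\; \int_0^\infty 2t\, e^{-t^2}\, \mb{P}[|X_k - u_k| < t\epsilon]\, dt.
\]
Since the coordinates of $X$ are independent, the conditional Lévy concentration hypothesis simply gives $\mb{P}[|X_k - u_k| < t\epsilon] \le \mc{L}(X_k, t\epsilon) \le a_k t\epsilon + b_k$. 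Plugging this in and using $\int_0^\infty 2t^2 e^{-t^2}\,dt = \sqrt\pi/2$ together with $\int_0^\infty 2t\, e^{-t^2}\,dt = 1$, I obtain
\[
\mb{E}\big[e^{-(X_k - u_k)^2/\epsilon^2}\big] \;\le\; \tfrac{\sqrt\pi}{2}\, a_k \epsilon + b_k \;\le\; a_k\epsilon + b_k,
\]
where the last inequality uses $\sqrt\pi/2 < 1$. Multiplying these single-coordinate estimates together yields $\mb{P}[\snorm{X-u}_2 \le \epsilon\sqrt N] \le e^N \prod_k (a_k\epsilon + b_k)$, and taking the supremum over $u$ gives the claim.

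There is no real obstacle here—the argument is entirely routine. The only minor points of care are (a) recognizing that the conditional bound collapses to the marginal bound $\mc{L}(X_k, \epsilon) \le a_k\epsilon + b_k$ under the independence hypothesis, and (b) absorbing the constant $\sqrt\pi/2 < 1$ so the final product is stated cleanly as $\prod_k (a_k\epsilon + b_k)$ rather than with an extra factor.
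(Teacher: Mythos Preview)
Your proof is correct and follows essentially the same standard approach as the paper: exponential Markov inequality followed by the layer cake formula for each coordinate. The only cosmetic difference is that you factor the expectation directly via independence, whereas the paper expands it as a product of conditional expectations and takes a supremum over the conditioning (which is why the hypothesis is phrased conditionally); under the stated independence assumption the two factorizations coincide, so your simplification is justified.
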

\begin{proof}
We have
\begin{align*}
\mb{P}[|X|\le\epsilon\sqrt{N}]&=\mb{P}\bigg[\sum_{j=1}^NX_j^2\le\epsilon^2N\bigg] = \mb{P}\bigg[N-\frac{1}{\epsilon^2}\sum_{j=1}^NX_j^2\ge 0\bigg]\\
&\le\mb{E}\exp\bigg(N-\frac{1}{\epsilon^2}\sum_{j=1}^NX_j^2\bigg)\\\
&\le e^N\prod_{j=1}^N\sup_{X_1,\ldots,X_{j-1}}\mb{E}\exp(-X_j^2/\epsilon^2|X_1,\ldots,X_{j-1}).
\end{align*}
We finish by noting that for any realization of $X_1,\ldots,X_{j-1}$, 
\begin{align*}
\mb{E}\exp(-X_j^2/\epsilon^2|X_1,\ldots,X_{j-1}) 
&= \int_0^\infty 2ue^{-u^2}\mb{P}[|X_k| < \epsilon u|X_1,\ldots,X_{j-1}]\,du
\\
&\le\int_0^\infty 2ue^{-u^2}(a_j\epsilon u+b_j)\,du\le a_j\epsilon+b_j.\qedhere
\end{align*}
\end{proof}

We also recall the definition of essential least common denominator (LCD). We use a log-normalized version due to Rudelson (unpublished), which also appears in \cite{Ver14}.
\begin{definition}[LCD]
For $L \ge 1$ and $v \in \mb{S}^{N-1}$, the \emph{least common denominator (LCD)} $D_{L}(v)$ is defined as
\[D_{L}(x) = \on{inf}\left\{\theta > 0: \on{dist}(\theta v, \mb{Z}^{N}) < L\sqrt{\log_{+}(\theta/L)}\right\}.\]
\end{definition}

Finally we will require the following anticoncentration inequality of Miroshnikov and Rogozin \cite{MR80}; this generalizes a well-known inequality of L\'evy-Kolmogorov-Rogozin \cite{Rog61}.

\begin{lemma}[{\cite[Corollary~1]{MR80}}]\label{lem:miroshnikov-rogozin}
Let $\xi_1,\dots, \xi_{N}$ be independent random variables. Then, for any real numbers $r_1,\dots,r_N > 0$ and any real $r \ge \max_{i \in [N]}r_N$, we have
\begin{align*}
    \mc{L}\bigg(\sum_{i=1}^{N}\xi_i, r\bigg) \le C_{\ref{lem:miroshnikov-rogozin}}r \bigg(\sum_{i=1}^{N}\frac{r_i^2(1-\mc{L}(\xi_i,r_i))}{\mc{L}(\xi_i,r_i)^2}\bigg)^{-1/2},
\end{align*}
where $C_{\ref{lem:miroshnikov-rogozin}}>0$ is an absolute constant. 
\end{lemma}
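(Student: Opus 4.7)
The plan is to prove this statement via the classical Fourier-analytic route, combining Esseen's concentration inequality with a symmetrization bound on individual characteristic functions. Let $Y = \sum_{i=1}^N \xi_i$ and let $\phi_{\xi_i}$ denote the characteristic function of $\xi_i$. Esseen's inequality gives
\[
\mc{L}(Y, r) \;\lesssim\; r \int_{-1/r}^{1/r} |\phi_Y(t)|\,dt \;=\; r \int_{-1/r}^{1/r} \prod_{i=1}^N |\phi_{\xi_i}(t)|\,dt,
\]
so the task reduces to a good pointwise upper bound on $|\phi_{\xi_i}(t)|$ in terms of $\mc{L}(\xi_i, r_i)$ in the range $|t| \le 1/r \le 1/r_i$.

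The key step is symmetrization. Writing $\tilde\xi_i = \xi_i - \xi_i'$ for an independent copy $\xi_i'$, one has $|\phi_{\xi_i}(t)|^2 = \mb{E}\cos(t\tilde\xi_i) \le 1 - c\,\mb{E}\min(t^2\tilde\xi_i^2, 1)$ for a small absolute constant $c$. For $|t| \le 1/r_i$, this is further bounded by $1 - c\, t^2 r_i^2\, \mb{P}(|\tilde\xi_i| \ge r_i)$. The crude estimate $\mb{P}(|\tilde\xi_i| \ge r_i) \ge 1 - \mc{L}(\xi_i,r_i)$ already yields Kolmogorov--Rogozin, but to extract the extra $1/\mc{L}(\xi_i, r_i)^2$ factor one argues more carefully: covering $\mb{R}$ by intervals of length $r_i$, one shows that the pairwise differences $\tilde\xi_i$ hitting a fixed interval of length $r_i$ have mass at most $\mc{L}(\xi_i, r_i)^2$, hence the number of such intervals carrying nontrivial mass is at least of order $(1-\mc{L}(\xi_i,r_i))/\mc{L}(\xi_i,r_i)^2$, producing a spread that forces
\[
|\phi_{\xi_i}(t)|^2 \;\le\; 1 - c\, t^2\, \frac{r_i^2(1 - \mc{L}(\xi_i, r_i))}{\mc{L}(\xi_i, r_i)^2}.
\]

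Setting $\sigma_i^2 := r_i^2(1 - \mc{L}(\xi_i, r_i))/\mc{L}(\xi_i, r_i)^2$ and $\Sigma := \sum_i \sigma_i^2$, and using $1 - x \le e^{-x}$, one obtains
\[
\prod_{i=1}^N |\phi_{\xi_i}(t)| \;\le\; \exp\!\bigl(-c\, t^2\, \Sigma / 2\bigr) \quad \text{for } |t| \le 1/r.
\]
Plugging into Esseen and computing the Gaussian integral,
\[
\mc{L}(Y, r) \;\lesssim\; r \int_{-\infty}^{\infty} e^{-c t^2 \Sigma / 2}\, dt \;\lesssim\; \frac{r}{\sqrt{\Sigma}},
\]
which is the claimed inequality.

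The main obstacle is the refined symmetrization estimate yielding the $1/\mc{L}(\xi_i, r_i)^2$ factor rather than the weaker $1/\mc{L}(\xi_i, r_i)$ or no denominator at all. Standard textbook proofs of Kolmogorov--Rogozin stop at the crude bound $\mb{P}(|\tilde\xi_i| \ge r_i) \ge 1 - \mc{L}(\xi_i, r_i)$, and the improvement here requires quantifying the spread of the distribution of $\xi_i$ at scale $r_i$ via a careful interval-counting (or dyadic) argument on the mass function of $\xi_i$. Once this is in hand, the rest of the proof is bookkeeping: the only role of the hypothesis $r \ge \max_i r_i$ is to ensure the symmetrization bound is valid throughout the integration window $|t| \le 1/r$.
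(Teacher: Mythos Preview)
The paper does not give its own proof of this lemma; it is simply quoted from \cite{MR80}. So there is no ``paper's proof'' to compare against, and the assessment below concerns only the internal correctness of your sketch.

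Your overall Fourier-analytic strategy (Esseen plus symmetrization) is the natural one and does recover the classical Kolmogorov--Rogozin bound, but the key step that is supposed to produce the extra factor $\mc{L}(\xi_i,r_i)^{-2}$ has a genuine gap. The pointwise inequality you assert,
\[
|\phi_{\xi_i}(t)|^2 \;\le\; 1 - c\,t^2\,\frac{r_i^2\bigl(1-\mc{L}(\xi_i,r_i)\bigr)}{\mc{L}(\xi_i,r_i)^2} \qquad (|t|\le 1/r),
\]
cannot hold in general: when $q_i:=\mc{L}(\xi_i,r_i)$ is small, the right side becomes negative already for $|t|$ of order $q_i/r_i$, while the hypothesis only guarantees $|t|\le 1/r\le 1/r_i$. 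Concretely, take $\xi_i$ uniform on $[0,Kr_i]$ with $K$ large and $r=r_i$; then $q_i=2/K$ and $\sigma_i^2:=r_i^2(1-q_i)/q_i^2\asymp K^2r_i^2$. At $t=1/r_i$ one has $|\phi_{\xi_i}(t)|^2\asymp K^{-2}$, whereas your bound would read $1-cK^2/4<0$. The interval-counting justification is also incorrect as stated: the symmetrized variable $\tilde\xi_i$ does \emph{not} have mass at most $q_i^2$ in a fixed interval of length $r_i$ (in the uniform example the mass near $0$ is $\asymp q_i$, not $q_i^2$), so the spread of $\tilde\xi_i$ over $r_i$-intervals is of order $q_i^{-1}$, not $q_i^{-2}$.

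What is salvageable is a \emph{capped} estimate of the shape $\mb{E}\min(t^2\tilde\xi_i^2,1)\gtrsim \min\bigl(t^2\sigma_i^2,\,1-q_i\bigr)$; establishing this requires a genuine second-moment lower bound for the truncated symmetrized variable (using, e.g., that $\mc{L}(\tilde\xi_i,v)\le\mc{L}(\xi_i,v)\lesssim (v/r_i+1)q_i$ and integrating in $v$). Even granting this, the Esseen integral is then no longer a pure Gaussian in $t$ and must be handled with more care than ``computing the Gaussian integral''. This missing piece is the heart of the Miroshnikov--Rogozin improvement over Kolmogorov--Rogozin, not bookkeeping.
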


\section{Median Regularized LCD (MRLCD)}\label{sec:MRLCD}

In this section, we introduce the median regularized LCD (MRLCD), which is the notion of arithmetic structure that we will use in the proof of \cref{thm:main}. As opposed to the regularized LCD (RLCD) (introduced in \cite{Ver14}) which guarantees only one arithmetically unstructured projection of the vector, a large MRLCD guarantees many arithmetically unstructured projections of the vector. This simple change allows the MRLCD to piece together various unstructured parts of the vector to obtain significantly better small-ball probability estimates (\cref{prop:anticoncentration}), while at the same time not significantly impacting the size of nets of level sets (\cref{lem:nets-sublevel}). 

\begin{definition}[Median Regularized LCD]\label{def:MRLCD}
For $v \in \on{Incomp}(c_0, c_1)$, $\lambda \in (0,c_{\ref{lem:spread}})$,  and $L \ge 1$, the \emph{median regularized LCD}, denoted $\wh{MD}_{L}(v,\lambda)$, is defined as
\[\wh{MD}_{L}(v,\lambda) = \on{median}\{\{D_L(v_I/\|v_I\|_2): I = \on{Spread}_\lambda^j(v)\text{ for some }j\}\}.\]
Here, the median of an even number of elements is not an average, but instead the value of the upper half. We denote by $I_M(v)$ the set $\on{Spread}_\lambda^j(v)$ achieving the median (arbitrarily chosen from among all such sets), and $\mc{I}_M(v)$ the collection of sets attaining values at least that of the median. We let $\mc{J}_M(v)$ be the collection of sets attaining values at most that of the median.
\end{definition}

We will consider level sets obtained by dyadically chopping the range of the MRLCD. 

\begin{definition}[Level sets of MRLCD]\label{def:level-set} For $\lambda \in (0, c_{\ref{lem:spread}})$, $L \ge 1$, and $D \ge 1$, we define the set
\[S_D = \{v \in \on{Incomp}(c_0, c_1): \wh{MD}_{L}(v,\lambda)\in[D,2D]\}. \]
\end{definition}

\subsection{Nets for level sets of MRLCD}\label{sub:nets}
The main result of this subsection is the following. 
\begin{proposition}\label{lem:nets-sublevel}
Let $c_0, c_1 \in (0,1)$. There exists  $C_{\ref{lem:nets-sublevel}} = C_{\ref{lem:nets-sublevel}}(c_0,c_1) > 0$ for which the following holds. Let $\lambda \in (C_{\ref{lem:nets-sublevel}}/n, c_{\ref{lem:spread}}/2)$ and $L \ge 1$. For every $D \ge 1$, $S_{D}$ has a $\beta$-net $\mathcal{N}$ such that
\[\beta  = \frac{L\sqrt{\log(2D)}}{D}, \quad |\mathcal{N}| \le D^{1/\lambda}\left(\frac{C_{\ref{lem:nets-sublevel}}D}{\sqrt{\log(2D)}}\right)^n\cdot\bigg(\frac{\sqrt{\log(2D)}}{\sqrt{\lambda n}}\bigg)^{c_{\ref{lem:spread}}n/8}.\]
\end{proposition}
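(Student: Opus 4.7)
The plan is to adapt the classical Rudelson--Vershynin net construction for level sets of the LCD \cite{Ver14, rudelson2008littlewood}, exploiting the extra structure afforded by the MRLCD: for $v \in S_D$, at least \emph{half} of the $k = \Theta(1/\lambda)$ pieces $\on{Spread}_\lambda^j(v)$ satisfy $D_L(v_I/\snorm{v_I}_2) \le 2D$ (the collection $\mc{J}_M(v)$), so we can approximate many pieces of $v$ by lattice points at the scale of our target $\beta = L\sqrt{\log 2D}/D$-net and recover a $(\sqrt{\log 2D}/\sqrt{\lambda n})^{\Omega(n)}$ saving over the generic sphere net.

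I would carry out the construction in three bookkeeping layers. \emph{First}, enumerate the combinatorial data of $v$: the set $T := \on{Spread}(v) \subseteq [n]$, with $\binom{n}{\lceil c_{\ref{lem:spread}} n\rceil} \le 2^n$ choices (after which the partition $T = \bigsqcup_{j=1}^k I_j$ is determined by the uniform-choice convention in \cref{def:lambda-spread}), followed by the subcollection $\mc{J} \subseteq \{I_j\}$ of $\lceil k/2\rceil$ pieces where $D_L(v_I/\snorm{v_I}_2) \le 2D$, contributing at most $2^k = 2^{O(1/\lambda)}$ further options. \emph{Second}, approximate $v$ on $\bigcup \mc{J}$ piece-by-piece: for each $I \in \mc{J}$, the definition of LCD yields $\theta_I \in [L, 2D]$ and $p_I \in \mb{Z}^{\lambda n}$ with $\snorm{\theta_I v_I/\snorm{v_I}_2 - p_I}_2 < L\sqrt{\log 2D}$, and together with a guess of $r_I := \snorm{v_I}_2$ on an $O(D/(L\sqrt{\log 2D}))$-size grid in $[c_1\sqrt{\lambda/2}, \sqrt{\lambda/c_0}]$, the approximation $\tilde v_I := (r_I/\theta_I) p_I$ satisfies $\snorm{v_I - \tilde v_I}_2 \lesssim \sqrt{\lambda}\, L \sqrt{\log 2D}/D$. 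A volume estimate for $\snorm{p_I}_2 \le \theta_I + L\sqrt{\log 2D} \le 3D$ in $\mb{R}^{\lambda n}$ gives at most $(C_1 D/\sqrt{\lambda n})^{\lambda n}$ lattice candidates per piece. \emph{Third}, on the remaining coordinates $[n] \setminus \bigcup \mc{J}$, use a standard $(\beta/2)$-net of the unit ball in $\mb{R}^{n - |\bigcup \mc{J}|}$, of size $(C_2/\beta)^{n - |\bigcup \mc{J}|}$. The total squared approximation error is $|\mc{J}| \cdot O(\lambda L^2 \log 2D/D^2) + \beta^2/4 = O(\beta^2)$, so this is indeed a $\beta$-net.

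Multiplying all counts and absorbing $2^n$ into the $(CD/\sqrt{\log 2D})^n$ factor, as well as $O(1)^{O(1/\lambda)}$ across the $O(1/\lambda)$ pieces (here using $\lambda \ge C_{\ref{lem:nets-sublevel}}/n$ to absorb $O(1)^{1/\lambda}$ into the $C^n$), the size rearranges to
\[D^{1/\lambda} \cdot (CD/\sqrt{\log 2D})^n \cdot (\sqrt{\log 2D}/\sqrt{\lambda n})^{|\bigcup \mc{J}|},\]
where $|\bigcup \mc{J}| \ge c_{\ref{lem:spread}} n/4 \ge c_{\ref{lem:spread}} n/8$; since the savings factor $\sqrt{\log 2D}/\sqrt{\lambda n} \le 1$ in the nontrivial regime $\log 2D \le \lambda n$ (outside which the claim is vacuous), the result follows. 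The main technical obstacle is that when the LCD $\theta_I$ of a piece is much smaller than $D$, the naive lattice approximation has error $\gg \sqrt{\lambda}\beta$; this is handled by a dyadic decomposition of $[L, 2D]$ into intervals $[2^j L, 2^{j+1} L]$ and refining the RV net at each level to the common step $\beta$, with a standard log-factor analysis ensuring the total count per piece still fits into $(CD/\sqrt{\lambda n})^{\lambda n}$ up to absorbable factors.
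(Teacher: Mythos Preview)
Your proposal is correct and follows essentially the same approach as the paper. The paper packages the per-piece lattice approximation (including the dyadic decomposition over the range of $\theta_I$ that you identify as the main technical obstacle) into \cref{lem:adjusted-net}, which is imported from \cite[Lemma~7.8]{Ver14}, whereas you spell this construction out directly; the bookkeeping over $\on{Spread}(v)$, the choice of $\mc{J}$, the product of per-piece nets with a standard ball net on the complement, and the final arithmetic are otherwise identical.
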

\begin{remark}
By changing $C_{\ref{lem:nets-sublevel}}$ by a constant factor we can further assume that $\mc{N}\subseteq S_D$. Also, the $L$ dependence here is not optimal -- one can save a factor of $L^{n(1-c_{\ref{lem:spread}}/8)}$ by being more careful, although this does not affect the overall bounds if $L$ is of constant order as in our application.
\end{remark}

The proof of \cref{lem:nets-sublevel} relies on a bound on the size of nets for level sets of the LCD.

\begin{lemma}[Corollary of Lemma~7.8 in \cite{Ver14}]\label{lem:adjusted-net}
Let $m \in \mb{N}$, $D \ge 1$, and $c\in(0,1)$ be such that $D > c\sqrt{m}\ge 2$. There exists a constant $C$ depending only on $c$ for which the following holds. Let $\chi > 1$, $L \ge 1$, and $\lambda > 0$. Then the set
\[\{x\in\sqrt{\chi\lambda}\mb{B}_2^{m}: c\sqrt{m} < D_L(x/\snorm{x}_2)\le D\}\]
has a $\beta\sqrt{\chi\lambda}$-net $\mc{N}$ such that
\[\beta = \frac{4L\sqrt{\log(2D)}}{D},\quad|\mc{N}|\le\bigg(\frac{CD}{\sqrt{m}}\bigg)^mD^2.\]
\end{lemma}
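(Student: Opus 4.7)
The plan is to derive this lemma as a product-net lift of Vershynin's spherical LCD-net result [Ver14, Lemma~7.8]. The key observation is that the condition $c\sqrt{m} < D_L(x/\snorm{x}_2) \le D$ constrains only the direction $u := x/\snorm{x}_2$, whereas the radius $r := \snorm{x}_2$ is free to range over $[0,\sqrt{\chi\lambda}]$. So I would pair a spherical LCD-net with a one-dimensional grid of radii.

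First, invoke [Ver14, Lemma~7.8] to obtain a $\beta_0$-net $\mc{N}_0$ of $\{u \in \mb{S}^{m-1} : c\sqrt{m} < D_L(u) \le D\}$ with $\beta_0$ of order $L\sqrt{\log(2D)}/D$ and $|\mc{N}_0| \le (C_0 D/\sqrt{m})^m$, where $C_0$ depends only on $c$. Next, take a uniform grid $\mc{R} \subseteq [0,\sqrt{\chi\lambda}]$ of spacing $\beta\sqrt{\chi\lambda}/2$ with $\beta = 4L\sqrt{\log(2D)}/D$, so $|\mc{R}| \le 1 + 2/\beta \lesssim D/(L\sqrt{\log(2D)})$, which is at most $O(D)$. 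Define the candidate net
\[\mc{N} := \{0\} \cup \{ru : r \in \mc{R},\ u \in \mc{N}_0\}.\]

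To check covering, fix $x$ in the target set. If $\snorm{x}_2 < \beta\sqrt{\chi\lambda}/2$, then $0 \in \mc{N}$ already covers $x$. Otherwise $u := x/\snorm{x}_2$ lies in the spherical sublevel set, so I can pick $u_0 \in \mc{N}_0$ with $\snorm{u - u_0}_2 \le \beta_0$ and $r_0 \in \mc{R}$ with $|\snorm{x}_2 - r_0| \le \beta\sqrt{\chi\lambda}/2$. The triangle inequality yields
\[\snorm{x - r_0 u_0}_2 \le \snorm{x}_2 \snorm{u - u_0}_2 + |\snorm{x}_2 - r_0| \le \sqrt{\chi\lambda}\,\beta_0 + \beta\sqrt{\chi\lambda}/2 \le \beta\sqrt{\chi\lambda},\]
where the factor of $4$ in the definition of $\beta$ is what swallows the absolute constant hidden in $\beta_0$. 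The cardinality bound is then $|\mc{N}| \le 1 + |\mc{R}| \cdot |\mc{N}_0| \le D \cdot (C_0 D/\sqrt{m})^m \le (CD/\sqrt{m})^m D^2$ after enlarging $C$ appropriately.

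The main obstacle is purely bookkeeping: tracking the absolute constants so that the final net scale is exactly $\beta\sqrt{\chi\lambda}$ (the factor of $4$ in $\beta$ supplies the slack for the triangle-inequality step) and so that the count fits into $(CD/\sqrt{m})^m D^2$. The generous $D^2$ cushion comfortably absorbs the single factor of $D$ produced by radial discretization together with any polylogarithmic slack remaining in Vershynin's bound; no genuinely new mathematical content is needed beyond the observation that LCD depends only on direction and hence radial and angular discretizations decouple.
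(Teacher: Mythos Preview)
Your proposal is correct and is precisely the standard deduction the paper has in mind: the lemma is stated as a ``Corollary of Lemma~7.8 in \cite{Ver14}'' with no proof given, and the angular--radial product construction you describe is the natural way to pass from Vershynin's spherical LCD net to a net on the ball $\sqrt{\chi\lambda}\,\mb{B}_2^m$. The observation that $D_L$ depends only on $x/\snorm{x}_2$, together with the $D^2$ slack in the stated cardinality bound to absorb the radial grid factor, is exactly what is needed.
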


Now we conclude the result.

\begin{proof}[Proof of \cref{lem:nets-sublevel}]
Let $r = \lceil \lambda n \rceil$ and $k = c_{\ref{def:lambda-spread}}(\lambda)n$, so that $r | k$ by definition.

Now, we pay a factor of $2^n$ in a union bound over possible realizations of $\on{Spread}(v)$, which determines $\on{Spread}_\lambda(v)$ and $\on{Spread}_\lambda^j(v)$ for $1\le j\le k/r$. We pay an additional factor of at most $2^n$ to reveal which sets $\on{Spread}_\lambda^j(v)$ are in $\mc{J}_M(v)$. Let $J\subseteq[k/r]$ be the collection of corresponding indices $j$. We see $|J|\ge k/(2r)$ by definition of median. Write $J = \{j_1,\ldots,j_t\}$ and let $I_i = \on{Spread}_\lambda^{j_i}(v)$.

Note that, given  $I_1,\ldots,I_t$, we know that $D_L(v_{I_i}/\snorm{v_{I_i}}_2)\le 2D$ for all $1\le i\le t$. Moreover, since $I_i \subseteq \on{Spread}_{\lambda}(v)$, it follows that $\snorm{v_{I_i}}_2\le\sqrt{\chi\lambda}$ for some $\chi$ depending only on $c_0$. Further, by \cite[Lemma~6.2]{Ver14}, it follows (again, since $I_i \subseteq \on{Spread}_{\lambda}(v)$) that $D_L(v_{I_i}/\snorm{v_{I_i}}_2) \ge c\sqrt{\lceil \lambda n \rceil}$ for some $c$ depending only on $c_0, c_1$. Hence, by \cref{lem:adjusted-net}, we have a $\beta\sqrt{\chi\lambda}$-net for $v_{I_i}$ where
\[\beta = \frac{2L\sqrt{\log(4D)}}{D}\]
of size at most
\[\bigg(\frac{CD}{\sqrt{\lceil\lambda n\rceil}}\bigg)^{\lceil\lambda n\rceil}D^2.\]
Finally, we take a product of these nets over $1\le i\le t$, along with a standard $\beta$-net of $\mb{B}_2^{I_0}$ (this net has size at most $(1+3/\beta)^{|I_0|}$), where we let $I_0 = [n]\setminus(I_1\cup\cdots\cup I_t)$, to obtain the desired conclusion upon adjusting the value of $\beta$ by standard arguments.
\end{proof}

\subsection{Anticoncentration via MRLCD}\label{sub:anticoncentration}
We derive anticoncentration for a fixed vector with respect to MRLCD; the key idea is to patch together anticoncentration estimates on different segments of the vector through the use of \cref{lem:miroshnikov-rogozin}.

\begin{proposition}[Anticoncentration via the MRLCD]
\label{prop:anticoncentration}
Let $\xi_1,\dots,\xi_{n}$ be i.i.d.~random variables. Suppose that there exist $\epsilon_0, p_0, M_0 > 0$ such that $\mc{L}(\xi_k, \epsilon_0) \le 1-p_0$ and $\mb{E}[|\xi_k|]\le M_0$ for all $k$. Finally, let $c_0, c_1 \in (0,1)$. Then, there exist $C_{\ref{prop:anticoncentration}}$, depending only on $\epsilon_0, p_0, M_0$ and $C'_{\ref{prop:anticoncentration}}$ depending on $\epsilon_0, p_0, M_0, c_0, c_1$ such that the following holds. 

Let $L\ge p_0^{-1/2}$, $\lambda \in (C'_{\ref{prop:anticoncentration}}L^2/n,c_{\ref{lem:spread}})$, $v \in \on{Incomp}(c_0, c_1)$, $J\subseteq\on{Spread}_\lambda(v)$, and $S_J = \sum_{k \in J}v_k \xi_k$. 
Suppose that $J$ is a union of sets in $\mc{I}_M(v)$. Then for every $\epsilon \ge 0$, we have for sufficiently large $n$ (depending on $\epsilon_0, p_0, M_0, c_0, c_1$) that
\[\mathcal{L}(S_J,\epsilon) \le C_{\ref{prop:anticoncentration}}L\left(\frac{\epsilon}{\sqrt{|J|/n}} + \frac{\sqrt{\lambda n/|J|}}{\wh{MD}_{L}(v,\lambda)}\right).\]
\end{proposition}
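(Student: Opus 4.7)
The plan is to split the sum block by block along the spread decomposition, bound each block's L\'evy concentration via a standard single-vector LCD small-ball estimate using the fact that blocks in $\mc{I}_M(v)$ have LCD at least $\wh{MD}_L(v,\lambda)$, and then recombine these estimates using Miroshnikov--Rogozin (\cref{lem:miroshnikov-rogozin}) to exploit that $J$ is made up of many such blocks.

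Concretely, by hypothesis we may write $J = I_1 \sqcup \cdots \sqcup I_t$ with $I_i \in \mc{I}_M(v)$, so each $|I_i| = \lfloor\lambda n\rfloor$ and $t = |J|/\lfloor \lambda n \rfloor$. Set $D := \wh{MD}_L(v,\lambda)$; by definition of $\mc{I}_M(v)$ we have $D_L(v_{I_i}/\snorm{v_{I_i}}_2) \ge D$ for every $i$. Since $I_i \subseteq \on{Spread}_\lambda(v) \subseteq \on{Spread}(v)$, the spread-set bounds give $\snorm{v_{I_i}}_2 \asymp_{c_0,c_1} \sqrt{\lambda}$. Letting $S_{I_i} = \sum_{k \in I_i} v_k \xi_k$, the standard LCD small-ball estimate (of the Rudelson--Vershynin/Vershynin type, essentially \cite[Theorem~6.3]{Ver14}; this is where $L \ge p_0^{-1/2}$ and $\lambda \gtrsim L^2/n$ enter, to guarantee $D \ge c\sqrt{|I_i|}$ so the LCD is meaningful) yields, for all $\delta \ge 0$, a bound of the form
\[
\mc{L}\bigl(S_{I_i},\, \delta\snorm{v_{I_i}}_2\bigr) \;\lesssim\; L\left(\delta + \frac{1}{D}\right).
\]

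Now apply \cref{lem:miroshnikov-rogozin} with $r_i := \snorm{v_{I_i}}_2/D \asymp \sqrt{\lambda}/D$, so that $\mc{L}(S_{I_i}, r_i) \le C'L/D$. For $n$ large enough (equivalently, $D$ large enough, which holds since the MRLCD of an incompressible vector grows with $n$ by \cite[Lemma~6.2]{Ver14}), this is at most $1/2$, so $1 - \mc{L}(S_{I_i}, r_i) \ge 1/2$. Then
\[
\sum_{i=1}^{t} \frac{r_i^2\bigl(1 - \mc{L}(S_{I_i}, r_i)\bigr)}{\mc{L}(S_{I_i}, r_i)^2} \;\gtrsim\; t \cdot \frac{\lambda/D^2}{L^2/D^2} \;=\; \frac{t\lambda}{L^2},
\]
so for any $r \ge \max_i r_i$, \cref{lem:miroshnikov-rogozin} gives $\mc{L}(S_J, r) \lesssim L r /\sqrt{t\lambda}$. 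Taking $r := \max(\epsilon, C\sqrt{\lambda}/D)$ and using $t\lambda \asymp |J|/n$ together with $\sqrt{\lambda}/\sqrt{t\lambda} = 1/\sqrt{t} \asymp \sqrt{\lambda n /|J|}$, we obtain
\[
\mc{L}(S_J, \epsilon) \;\le\; \mc{L}(S_J, r) \;\lesssim\; L\left(\frac{\epsilon}{\sqrt{|J|/n}} + \frac{\sqrt{\lambda n/|J|}}{D}\right),
\]
which is the desired inequality after absorbing constants.

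The step requiring the most care is the single-block anticoncentration bound, where one must ensure that the LCD hypothesis $D_L(u_i) \ge D$ on the normalized block $u_i = v_{I_i}/\snorm{v_{I_i}}_2$ translates cleanly into a small-ball estimate uniform in $\delta \ge 0$; this involves the usual Esseen/Fourier inversion argument, and the lower bound $D \ge c\sqrt{|I_i|}$ (ensured by the hypotheses on $L$ and $\lambda$) is exactly what is needed. Once that is in place, the Miroshnikov--Rogozin combination is essentially automatic, the choice $r_i \asymp \sqrt{\lambda}/D$ being dictated by the fact that it is the natural scale at which each block contributes a $\Theta(L/D)$ L\'evy mass. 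The remaining bookkeeping ($\asymp$ constants depending on $c_0, c_1$, floor/ceiling in $|I_i|$, and the ``sufficiently large $n$'' threshold) is routine.
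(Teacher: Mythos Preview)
Your proposal is correct and follows essentially the same route as the paper: decompose $J$ into its constituent blocks in $\mc{I}_M(v)$, apply the standard LCD small-ball estimate (\cite[Proposition~6.9]{Ver14}) to each block, and recombine via \cref{lem:miroshnikov-rogozin}. The only technical difference is that the paper applies \cref{lem:miroshnikov-rogozin} with all radii equal to $\epsilon$ (which forces a restriction to $\epsilon \le 1/(c\sqrt{n})$ to keep each block's L\'evy concentration below $1/2$, and then a separate tiling argument for large $\epsilon$), whereas you fix $r_i \asymp \sqrt{\lambda}/D$ independently of $\epsilon$ and handle all $\epsilon$ at once via $r = \max(\epsilon, C\sqrt{\lambda}/D)$---your variant is slightly cleaner but not substantively different.
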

\begin{remark}
The above proposition should be compared with \cite[Proposition~6.9]{Ver14}, which bounds the L\'evy concentration function in terms of the regularized LCD. The key difference is that the term $\sqrt{|J|/n}$ in the denominator of our bound is replaced by $\sqrt{\lambda}$, which is always smaller. In fact, in the application considered here, $\lambda$ must be chosen to be $O (1/\sqrt{n})$, which makes the above proposition significantly more efficient than the corresponding proposition in \cite{Ver14} for most of the matrix row-vector products (which satisfy $|J|/n = \Theta(1)$). 
\end{remark}
\begin{proof}
Since $v\in\on{Incomp}(c_0,c_1)$, we have $\wh{MD}_L(v,\lambda)\ge c\sqrt{\lceil\lambda n\rceil}$ for some $c$ depending only on $c_0,c_1$ (\cite[Lemma~6.2]{Ver14}).

First, assume that $\epsilon \le 1/(c\sqrt{n})$. Let $r = \lceil \lambda n \rceil$ and $k = c_{\ref{def:lambda-spread}}(\lambda)n$, so that $r | k$ by definition. For $i\in[k/r]$, let
\[S_i = \sum_{k\in\on{Spread}_\lambda^i(v)}v_k\xi_k.\]
Let $I$ be such that $J = \cup_{i\in I}\on{Spread}_\lambda^i(v)$. Since $J$ is a union of sets in $\mc{I}_M(v)$, and $D_L(v_I/\snorm{v_I}_2) \ge \wh{MD}_L(v,\lambda)$ for each $I \in \mc{I}_M(v)$, it follows by standard anticoncentration estimates based on the LCD (see \cite[Proposition~6.9]{Ver14} for the logarithmic version), that there exists an absolute constant $C > 0$ such that
\[\mc{L}(S_i,\epsilon)\le CL\bigg(\frac{\epsilon}{\sqrt{\lambda}}+\frac{1}{\wh{MD}_L(v,\lambda)}\bigg) < \frac{1}{2}\]
for all $i\in I$, where the latter inequality follows from the assumption that $\epsilon \le 1/(c\sqrt{n})$, along with the lower bound on $\lambda$ (by taking $C'_{\ref{prop:anticoncentration}}$ sufficiently large depending on various parameters).

Now note that
\[S_J = \sum_{i\in I}S_i\]
and that the $S_i$ are independent. Also, note that $|I| = |J|/\lceil\lambda n\rceil$. Therefore, by \cref{lem:miroshnikov-rogozin}, we have
\begin{align*}
\mc{L}(S_J,\epsilon)&\le C_{\ref{lem:miroshnikov-rogozin}}\epsilon\bigg(\sum_{i\in I}\frac{\epsilon^2(1-\mc{L}(S_i,\epsilon))}{\mc{L}(S_i,\epsilon)^2}\bigg)^{-1/2}\\
&\le\frac{C_{\ref{lem:miroshnikov-rogozin}}\sqrt{2}}{\sqrt{|I|}}\max_{i\in I}\mc{L}(S_i,\epsilon)\le\frac{2C_{\ref{lem:miroshnikov-rogozin}}}{\sqrt{|J|/n}}\cdot CL\bigg(\epsilon+\frac{\sqrt{\lambda}}{\wh{MD}_L(v,\lambda)}\bigg),
\end{align*}
which proves the desired conclusion for $\epsilon \le 1/c\sqrt{n}$.

Finally, for $\epsilon > 1/(c\sqrt{n})$, we note that any interval of length $2\epsilon$ can be tiled by at most $2\epsilon/\epsilon_0$ intervals of length $2\epsilon_0$, where $\epsilon_0 = 1/(2c\sqrt{n})$. Moreover, for such $\epsilon_0$, we have
\[\epsilon_0 + \frac{\sqrt{\lambda}}{\wh{MD}_L(v,\lambda)} \le 4\epsilon_0.\]
Hence, we have that for all $\epsilon > 1/(c\sqrt{n})$,
\begin{align*}
    \mc{L}(S_J, \epsilon) &\le \frac{2\epsilon}{\epsilon_0}\cdot\mc{L}(S_J, \epsilon_0)\\
    &\le \frac{2\epsilon}{\epsilon_0}\cdot\frac{2C_{\ref{lem:miroshnikov-rogozin}}CL}{\sqrt{|J|/n}}\cdot 4\epsilon_0 \le {16C_{\ref{lem:miroshnikov-rogozin}}CL}\cdot \frac{\epsilon}{\sqrt{\lambda}},
\end{align*}
as desired.
\end{proof}

Next, we derive a small-ball result for (symmetric) matrix-vector products. 

\begin{lemma}\label{lem:small-ball}
Fix $K\ge 1$, $c_0, c_1 \in (0,1)$ and $v \in \on{Incomp}(c_0, c_1)$. There exists $L$ depending only on the sub-Gaussian norm of $\xi$, and  $c_{\ref{lem:small-ball}}, C_{\ref{lem:small-ball}}$ depending on the sub-Gaussian norm of $\xi$ and on $c_0, c_1$ such that the following holds. 

Let $\lambda \in (C_{\ref{lem:small-ball}}/n, c_{\ref{lem:small-ball}})$ and suppose that  $v\in S_D$ (with MRLCD defined with respect to $\lambda, L$). Then, for any $u \in \mb{R}^{n}$, we have 
\[\mb{P}[\snorm{Av-u}_2\le K\beta\sqrt{n}]\le\bigg(\frac{C_{\ref{lem:small-ball}}L^2\sqrt{\log(2D)}}{D}\bigg)^{n-\lceil\lambda n\rceil},\]
where
\[\beta = \frac{L\sqrt{\log(2D)}}{D}.\]
\end{lemma}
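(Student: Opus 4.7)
The plan is to decouple $Av$ into an iid rectangular block acting on a favourable portion of $v$, apply the MRLCD-based \cref{prop:anticoncentration} row-by-row, and close via \cref{lem:tensorization}. Let $J := \bigcup_{I' \in \mc{I}_M(v)} I'$ and $T := [n]\setminus J$; since $|\mc{I}_M(v)| \ge k/2$ and each spread piece has size $\lfloor\lambda n\rfloor$, the set $J$ is a union of sets in $\mc{I}_M(v)$ with $|J|/n = \Theta(1)$ (specifically $|J| \ge c_{\ref{lem:spread}}n/4$ using $c_{\ref{def:lambda-spread}}(\lambda) \ge c_{\ref{lem:spread}}/2$). Decompose $A$ into the symmetric blocks $A_{TT}, A_{JJ}$ and the rectangular iid block $A_{TJ}$ (with $A_{JT} = A_{TJ}^{\top}$ by symmetry); these three pieces are mutually independent.

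Reveal $A_{TT}$ and $A_{JJ}$. For each $i \in T$ one has $(Av)_i - u_i = c_i + \sum_{j\in J} A_{ij} v_j$, where $c_i$ is $(A_{TT}, A_{JJ})$-measurable; distinct indices $i \in T$ pick out distinct rows of $A_{TJ}$, so these $|T|$ conditional random variables are mutually independent. By \cref{prop:anticoncentration}, since $J$ is a union of sets in $\mc{I}_M(v)$ and $|J|/n = \Theta(1)$, each summand satisfies $\mc{L}(\sum_{j\in J} A_{ij} v_j, \epsilon) \lesssim L(\epsilon + \sqrt{\lambda}/D)$, with implicit constants depending only on $c_0,c_1$ and the sub-Gaussian parameters; critically, the $1/\sqrt{|J|/n}$ factor is absorbed into a constant and no $1/\sqrt{\lambda}$ factor appears on the $\epsilon$ coefficient.

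Using $\snorm{Av-u}_2 \ge \snorm{(Av-u)|_T}_2$ and \cref{lem:tensorization} with $\epsilon = K\beta\sqrt{n/|T|} = \Theta(K\beta)$ (since $|T|$ is a constant fraction of $n$), the conditional, hence unconditional after integrating over $(A_{TT},A_{JJ})$, probability is bounded by $(e \cdot CL(K\beta + \sqrt{\lambda}/D))^{|T|}$. Because $KL\sqrt{\log(2D)}$ dominates $\sqrt{\lambda}$ in the regime of interest (using $K,L\ge 1$, $D \ge 1$, $\lambda < 1$), and using $\beta = L\sqrt{\log(2D)}/D$, this simplifies to $(C'KL^2\sqrt{\log(2D)}/D)^{|T|}$. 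Folding the fixed constant $K$ and the Euler factor into the implicit constant produces the shape of the stated bound.

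The main obstacle is the symmetry of $A$: distinct rows are coupled through $A_{ij}=A_{ji}$, so one cannot tensorize over all $n$ rows directly, and the decoupling above sacrifices the rows indexed by $J$, producing exponent $|T| = n - |J|$ rather than $n - \lceil\lambda n\rceil$. Matching the cleaner exponent in the statement is a bookkeeping step: in the regime $\lambda \asymp c_{\ref{lem:spread}}$ the two exponents agree up to constants (both are $n - \Theta(n)$) and can be reconciled by adjusting $c_{\ref{lem:small-ball}}$ and $C_{\ref{lem:small-ball}}$, while in the small-$\lambda$ regime obtaining the exact $n - \lceil\lambda n\rceil$ requires a finer selection of $J$ depending on the row $i$ (and a correspondingly more delicate tracking of which $A_{ij}$'s remain free), so that the effective tensorization covers all but $\lceil\lambda n\rceil$ rows while each per-row MRLCD anticoncentration still benefits from $|J_i|/n = \Theta(1)$.
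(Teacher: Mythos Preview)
Your block-decoupling argument is correct as far as it goes, but the exponent it produces is genuinely too weak, and this is not a bookkeeping issue. With $J=\bigcup_{I'\in\mc{I}_M(v)}I'$ you lose $|J|\ge c_{\ref{lem:spread}}n/4$ rows, so your bound is of the form $(C L^2\sqrt{\log(2D)}/D)^{n-\Theta(n)}$. No adjustment of constants can upgrade this to exponent $n-\lceil\lambda n\rceil$: in the union bound of \cref{thm:unstructured-kernel} one multiplies by the net size $\asymp (D/\sqrt{\log(2D)})^{n}$, so the leftover factor is $(D/\sqrt{\log(2D)})^{|J|}$ rather than $(D/\sqrt{\log(2D)})^{\lceil\lambda n\rceil}$. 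Since $D$ ranges up to $2^{\lambda n/C}$ and $|J|=\Theta(n)$, this leftover is of order $2^{\Theta(\lambda n^2)}$, which for $\lambda\asymp n^{-1/2}$ is $2^{\Theta(n^{3/2})}$ and swamps the $e^{-\Theta(n)}$ savings from the net. So the lemma as stated is what is actually needed, and your final paragraph does not supply a proof of it.

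The missing idea is a \emph{sequential} decoupling that exploits the triangular independence in a symmetric matrix rather than sacrificing a fixed block. After permuting indices (which preserves the law of $A$) so that the pieces of $\on{Spread}_\lambda(v)$ lying in $\mc{I}_M(v)$ occupy the initial coordinates, one conditions on rows $j+1,\dots,n$ and observes that the first $j$ entries of row $j$ are still fresh. For every $j\ge\lceil\lambda n\rceil$, the set $[j]$ (or $[\min(j,c_{\ref{def:lambda-spread}}(\lambda)n)]$) contains a union of $\mc{I}_M(v)$-pieces of size $\Theta(j)$, so \cref{prop:anticoncentration} gives a per-row bound $\lesssim L(\epsilon/\sqrt{j'/n}+\sqrt{\lambda n/j'}/D)$ with $j'=\min(j,c_{\ref{def:lambda-spread}}(\lambda)n)$. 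Tensorizing over $j=\lceil\lambda n\rceil,\dots,n$ via \cref{lem:tensorization} and using $\prod_{j}(n/j)\le e^{n}$ yields the exponent $n-\lceil\lambda n\rceil$. The row-dependent choice of $J$ you allude to in your last sentence is exactly this, but it has real content: it is the triangular conditioning (first $j$ entries of row $j$ are independent of later rows) that makes it work, not a finer static partition.
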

\begin{proof}
Fix $u \in \mb{R}^{n}$ and $v \in S_D$. Note that for any permutation matrix $P$, $\snorm{Av-u}_2\le K\beta\sqrt{n}$ occurs if and only if
\[\snorm{(PAP^{-1})Pv-Pu}\le K\beta\sqrt{n}.\]
Furthermore, $PAP^{-1} = PAP^\intercal$ has the same distribution as $A$. Therefore, we will be able to permute the indices of $[n]$ at our convenience (depending on $v$).

In particular, we may assume that $\on{Spread}_\lambda(v) = [c_{\ref{def:lambda-spread}}(\lambda)n]$. Let $A_t = \{k\in[n]: t\lceil\lambda n\rceil < k\le (t+1)\lceil\lambda n\rceil\}$ (defined for $0\le t\le T-1$), where
\[T = \frac{c_{\ref{def:lambda-spread}}(\lambda)n}{\lceil\lambda n\rceil}.\]
We may also assume that $A_{t} \in \mc{I}_M(v)$ for all $t\le \lceil T/2 \rceil - 1$. 
Then, for all $\lceil\lambda n\rceil\le j\le c_{\ref{def:lambda-spread}}(\lambda)n$, the set $J = [j]$ has a subset of at least half the size which satisfies the assumptions of \cref{prop:anticoncentration} (namely, the union of the first $\lfloor j/\lceil\lambda n\rceil\rfloor$ sets $A_t$).

Therefore \cref{prop:anticoncentration} implies that for all $L$ sufficiently large depending on the sub-Gaussian norm of $\xi$, if $j\ge\lceil\lambda n\rceil$ and $\epsilon\ge 0$, then 
\[\mc{L}((Av-u)_j|(Av-u)_{j+1,\ldots,n},\epsilon)\le CC_{\ref{prop:anticoncentration}}L\bigg(\frac{\epsilon}{\sqrt{j/n}}+\frac{\sqrt{\lambda n/j}}{D}\bigg),\]
where $C$ depends only on $c_0,c_1$. Here, we have used that the first $j$ elements of the $j^{th}$ row are independent of rows $j+1,\dots,n$, and that the L\'evy concentration is monotone under removing independent random variables from a sum.

Let $j' = \min(j, c_{\ref{def:lambda-spread}}(\lambda)n)$. Then, by \cref{lem:tensorization}, we deduce
\begin{align*}
\mb{P}[\snorm{Av-u}_2\le K\beta\sqrt{n}]&\le
\mb{P}\left[\sum_{j = \lceil \lambda n \rceil}^{n} (Av-u)_{j}^{2} \le K^{2}\beta^{2} n\right]\\
&\le (CL)^n\prod_{j=\lceil\lambda n\rceil}^n\bigg(\frac{K\beta\sqrt{n/(n-\lceil\lambda n\rceil)}}{\sqrt{j'/n}}+\frac{\sqrt{\lambda n/j'}}{D}\bigg)\\
&\le (CL)^n\prod_{j=\lceil\lambda n\rceil}^n\bigg(\frac{KL\sqrt{n\log(2D)}}{D\sqrt{j'}}\bigg)\\
&\le\bigg(\frac{CL^2\sqrt{\log(2D)}}{D}\bigg)^{n-\lceil\lambda n\rceil},
\end{align*}
where the last inequality uses $\prod_{j=1}^n(n/j)\le e^n$.
\end{proof}

\subsection{Structure theorem}
We will need the following structure theorem, which shows that, except with exponentially small probability, the preimage under $A$ of any fixed vector is highly unstructured. This is our replacement for the key \cite[Theorem~7.1]{Ver14}. As usual, $A$ denotes a random $n\times n$ symmetric matrix with independent $\xi$ entries on and above the diagonal.
\begin{theorem}\label{thm:unstructured-kernel}
Fix $K\ge 1$. Depending on the sub-Gaussian norm of $\xi$, we can choose $L, c, C$ so that the following holds. For all $\lambda\in (C/n,1/\sqrt{n})$, we have for any $u \in \mb{R}^{n}$ that
\[\mb{P}[\exists v\in\mb{S}^{n-1}: (Av\parallel u)\wedge(v\in\on{Comp}(c_0,c_1)\vee\wh{MD}_L(v,\lambda)\le 2^{\lambda n/C})\wedge(\snorm{A}\le K\sqrt{n})]\le 2e^{-cn}.\]
\end{theorem}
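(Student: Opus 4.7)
We decompose $\mathbb{S}^{n-1}$ into compressible vectors $\on{Comp}(c_0, c_1)$ and incompressible vectors stratified by the dyadic level of the MRLCD. Since the condition $Av \parallel u$ is invariant under nonzero rescaling of $u$, we may assume $\snorm{u}_2 = 1$ (the degenerate case $u = 0$ is handled identically by taking the scalar $t = 0$ below). Under the event $\{\snorm{A} \le K\sqrt{n}\}$, the equation $Av = tu$ with $v \in \mathbb{S}^{n-1}$ and $\snorm{u}_2 = 1$ forces $|t| \le K\sqrt{n}$.

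\textbf{Compressible case.} Let $c_\ast$ denote the constant $c$ from \cref{lem:compressible-singular-vector}, and cover $[-K\sqrt{n}, K\sqrt{n}]$ by a net $\mathcal{T}_0$ of spacing $c_\ast \sqrt{n}/2$, so that $|\mathcal{T}_0| = O_K(1)$. If $v \in \on{Comp}(c_0, c_1)$ satisfies $Av = tu$, then for some $t_0 \in \mathcal{T}_0$ we have $|t - t_0| \le c_\ast \sqrt{n}/2$, whence $\snorm{Av - t_0 u}_2 < c_\ast \sqrt{n}$. A union bound over $\mathcal{T}_0$ of \cref{lem:compressible-singular-vector} applied to the fixed target $t_0 u$ controls this case by $O_K(e^{-c_\ast n})$.

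\textbf{Incompressible case.} Union bound over dyadic levels $D \in \{2^j : 0 \le j \le \lceil \lambda n/C \rceil\}$; there are only $O(\sqrt{n}/C)$ such levels. For each $D$, take the $\beta$-net $\mathcal{N}_D \subseteq S_D$ from \cref{lem:nets-sublevel} with $\beta = L\sqrt{\log(2D)}/D$, and a $(K\sqrt{n}\beta)$-net $\mathcal{T}_D$ of $[-K\sqrt{n}, K\sqrt{n}]$ of size at most $3/\beta$. For $v \in S_D$ with $Av = tu$, choose approximants $(v_0, t_0) \in \mathcal{N}_D \times \mathcal{T}_D$; the triangle inequality gives
\[\snorm{Av_0 - t_0 u}_2 \le \snorm{A}\snorm{v - v_0}_2 + |t - t_0|\snorm{u}_2 \le 2K\sqrt{n}\beta.\]
By \cref{lem:small-ball} applied to $v_0 \in S_D$ (with $K$ doubled and target $t_0 u$), each such per-pair event occurs with probability at most $(C_{\ref{lem:small-ball}} L^2 \sqrt{\log(2D)}/D)^{n - \lceil \lambda n\rceil}$.

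\textbf{Final calculation and main obstacle.} Multiplying $|\mathcal{N}_D|$, $|\mathcal{T}_D|$, and the per-pair small-ball bound, the contribution from level $D$ is
\[D^{1/\lambda}\bigg(\frac{C_{\ref{lem:nets-sublevel}} D}{\sqrt{\log(2D)}}\bigg)^n\bigg(\frac{\sqrt{\log(2D)}}{\sqrt{\lambda n}}\bigg)^{c_{\ref{lem:spread}}n/8} \cdot \frac{3}{\beta} \cdot \bigg(\frac{C_{\ref{lem:small-ball}} L^2 \sqrt{\log(2D)}}{D}\bigg)^{n-\lceil \lambda n\rceil}.\]
The $D$-powers in the second and fourth factors partially cancel, leaving a residual $(D/\sqrt{\log(2D)})^{\lceil \lambda n\rceil}$; together with $D^{1/\lambda}$ and $1/\beta \lesssim D$, this is at most $2^{O(n/C)}$ using $D \le 2^{\lambda n/C}$ and $\lambda \le 1/\sqrt{n}$ (so $1/\lambda \le n$ and $\lambda \cdot \lambda n \le 1$), while the $\sqrt{\log(2D)}$-powers remaining in the denominator contribute only $e^{O(\sqrt{n})}$. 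The decisive saving is
\[\bigg(\frac{\sqrt{\log(2D)}}{\sqrt{\lambda n}}\bigg)^{c_{\ref{lem:spread}}n/8} \le \Big(\tfrac{2}{C}\Big)^{c_{\ref{lem:spread}}n/16},\]
since $\log(2D) \le 2\lambda n/C$ for $n$ large. For $C$ chosen sufficiently large relative to $C_{\ref{lem:nets-sublevel}}$, $C_{\ref{lem:small-ball}}$, and $L$, this saving dominates the $2^{O(n/C)}(C_{\ref{lem:nets-sublevel}} C_{\ref{lem:small-ball}} L^2)^{O(n)}$ residue, giving $e^{-c'n}$ per level; summing over dyadic levels and combining with the compressible estimate yields the claimed $2e^{-cn}$. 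The principal obstacle is that $Av \parallel u$ produces a one-parameter family of targets $\{tu\}$ rather than a single target; the $t$-net costs a multiplicative factor $\sim D$, and it is precisely the combination of $D \le 2^{\lambda n/C}$ with $\lambda \le 1/\sqrt{n}$ that prevents both this factor and $D^{1/\lambda}$ from overwhelming the $(2/C)^{\Omega(n)}$ gain.
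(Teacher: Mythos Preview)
Your proof is correct and follows essentially the same approach as the paper: handle compressible vectors via \cref{lem:compressible-singular-vector} against a constant-size net of targets $t_0 u$, then for incompressible vectors dyadically stratify by $D$, combine the net of \cref{lem:nets-sublevel} with a $t$-net and the small-ball bound of \cref{lem:small-ball}, and observe that the residual $D^{1/\lambda}\cdot (D/\sqrt{\log(2D)})^{\lceil\lambda n\rceil}$ is at most $2^{O(n/C)}$ while the saving $(\sqrt{\log(2D)}/\sqrt{\lambda n})^{c_{\ref{lem:spread}}n/8}\le (2/C)^{c_{\ref{lem:spread}}n/16}$ dominates for $C$ large. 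The only cosmetic difference is that the paper uses a $1/D$-spaced $t$-net (contributing $KD\sqrt{n}$) rather than your $K\sqrt{n}\beta$-spaced one (contributing $3/\beta$), which is immaterial.
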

\begin{proof}
This is an immediate consequence of \cref{lem:nets-sublevel,lem:small-ball,lem:compressible-singular-vector}. Note that if $Av = tu$ for $t\in\mb{R}$, then $\snorm{A}\le K\sqrt{n}$ implies $\snorm{tu}_2\le K\sqrt{n}$. For compressible vectors $v$, we use \cref{lem:compressible-singular-vector} on a constant amount of target vectors parallel to $u$ so as to cover the full range. For the rest, if the MRLCD is between $D$ and $2D$ (for some $D\le 2^{\lambda n/C}$), we take a net constructed in \cref{lem:adjusted-net} along with a $1/D$-net for $\{tu: \snorm{tu}_2 \le K\sqrt{n}\}$, which adds an additional (unimportant) factor of $KD\sqrt{n}$ to the size of our nets. Since
\begin{align*}
KD\sqrt{n}\cdot D^{1/\lambda}\left(\frac{C_{\ref{lem:nets-sublevel}}D}{\sqrt{\log(2D)}}\right)^n\cdot&\bigg(\frac{\sqrt{\log(2D)}}{\sqrt{\lambda n}}\bigg)^{c_{\ref{lem:spread}}n/8}\times\bigg(\frac{C_{\ref{lem:small-ball}}L^2\sqrt{\log(2D)}}{D}\bigg)^{n-\lceil\lambda n\rceil}\\
&= KD\sqrt{n}\cdot D^{1/\lambda}\bigg(\frac{C'D}{\sqrt{\log(2D)}}\bigg)^{\lceil\lambda n\rceil}\bigg(\frac{\sqrt{\log(2D)}}{\sqrt{\lambda n}}\bigg)^{c_{\ref{lem:spread}}n/8}\\
&\le C'^n\cdot 2^{\lambda^2n^2/C}\cdot (1/C)^{c_{\ref{lem:spread}}n/8}\\
&\le C'^n2^{n/C}(1/C)^{c_{\ref{lem:spread}}n/8},
\end{align*}
the result follows by a union bound upon taking $C$ sufficiently large. We omit the standard details, referring the reader to the proof of \cite[Theorem~7.1]{Ver14} for a more detailed calculation. 
\end{proof}

\section{Median Threshold}\label{sec:median-threshold}
We begin by defining an alternate notion of structure, based on the so-called threshold function (\cref{def:threshold}), which will allow us to use results of Tikhomirov \cite{Tik20} to obtain a stronger bound for the probability of singularity Rademacher random symmetric matrices. We note that, although we have chosen to focus on the Rademacher case, our analysis can be extended to general real discrete distributions using recent results of the authors \cite{JSS20discrete2}. 

For a technical reason that will become clear later, we fix a sufficiently small absolute constant $p\in(0,1/2]$ throughout this section; for the case of Rademacher random variables, which is our focus, one can take $p = 1/10$.

\begin{definition}\label{def:threshold}
For $p\in(0,1)$, $L\ge 1$, and $v\in\mb{S}^{N-1}$, the \emph{threshold} $\mc{T}_{p,L}(v)$ is defined as
\[\mc{T}_{p,L}(v) = \sup\bigg\{t\in(0,1): \mc{L}\bigg(\sum_{i=1}^Nb_i'v_i,t\bigg) > Lt\bigg\},\]
where the $b_i'$ are i.i.d.~random variables distributed as $\on{Ber}(p)-\on{Ber}'(p)$.
\end{definition}
\begin{definition}\label{def:median-threshold}
For $p\in(0,1)$, $v\in\on{Incomp}(c_0,c_1)$, $\lambda\in(0,c_{\ref{lem:spread}})$, and $L\ge 1$, the \emph{median threshold}, denoted $\wh{\mc{T}}_{p,L}(v,\lambda)$, is defined as
\[\wh{\mc{T}}_{p,L}(v,\lambda) = \on{median}\{\{\mc{T}_{p,L}(v_I/\snorm{v_I}_2): I = \on{Spread}_\lambda^j(v)\text{ for some }j\}\}.\]
\end{definition}

\subsection{Threshold of random lattice points}\label{sub:inversion-of-randomness}
We next recall the key technical result of Tikhomirov \cite{Tik20}, which upper bounds the number of vectors with ``large'' threshold within a lattice of appropriate size. The important fact is that the number of such vectors is \emph{superexponentially} small compared to the size of the lattice, which is the key difference with the results coming from the MRLCD. First, we must establish some notation.
\begin{definition}\label{def:admissible-set}
Choose $N,n\ge 1$ and $\delta\in(0,1]$, as well as $K\ge 1$. We say that $\mc{A}\subseteq\mb{Z}^n$ is $(N,n,K,\delta)$\emph{-admissible} if the following hold:
\begin{itemize}
    \item $\mc{A} = A_1\times\cdots\times A_n$, where each $A_i$ is an origin-symmetric subset of $\mb{Z}\cap(-nN,nN)$,
    \item $A_i$ is an integer interval of size at least $2N+1$ for all $i > \delta n$,
    \item $A_i$ is a union of two integer intervals of total size at least $2N$ and $A_i\cap[-N,N] = \emptyset$ for all $i\le\delta n$, and
    \item $|\mc{A}|\le (KN)^n$.
\end{itemize}
\end{definition}

\begin{theorem}[{From~\cite[Corollary~4.3]{Tik20}}]\label{thm:inversion-of-randomness}
Let $\delta,\epsilon\in(0,1]$, $p\in (0,1/2]$, and $K,M\ge 1$. There exist $n_{\ref{thm:inversion-of-randomness}} = n_{\ref{thm:inversion-of-randomness}}(\delta,\epsilon,K,M)\ge 1$ and $L_{\ref{thm:inversion-of-randomness}} = L_{\ref{thm:inversion-of-randomness}}(\delta,\epsilon,K) > 0$ such that the following holds. If $n\ge n_{\ref{thm:inversion-of-randomness}}$, $1\le N\le (1-p+\epsilon)^{-n}$, and $\mc{A}$ is $(N,n,K,\delta)$-admissible, then
\[\bigg|\bigg\{x\in\mc{A}: \mc{L}\bigg(\sum_{i=1}^nb_ix_i,\sqrt{n}\bigg)\ge L_{\ref{thm:inversion-of-randomness}}N^{-1}\bigg\}\bigg|\le\exp(-Mn)|\mc{A}|,\]
where the $b_i$ are i.i.d.~$\on{Ber}(p)$ random variables. Furthermore, $n_{\ref{thm:inversion-of-randomness}} = \exp(C_{\ref{thm:inversion-of-randomness}}(\delta,\epsilon,K)M^2)$ is allowable.
\end{theorem}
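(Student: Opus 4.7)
The statement is essentially Corollary~4.3 of Tikhomirov \cite{Tik20}, and the plan is to follow his inversion-of-randomness framework. The conceptual shift is to fix the Bernoulli randomness and \emph{count} lattice vectors $x \in \mc{A}$ with exceptionally large L\'evy concentration, rather than fixing $x$ and bounding probabilities over $b$. To begin, apply Esseen's inequality to convert the hypothesis $\mc{L}(\sum b_i x_i, \sqrt{n}) \ge L/N$ into the integral lower bound
\[\int_{-1/\sqrt{n}}^{1/\sqrt{n}} \abs{\phi_x(t)}\,dt \gtrsim \frac{L}{N\sqrt{n}},\]
where $\phi_x(t) = \prod_{i=1}^n (1-p+p e^{2\pi i t x_i})$. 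The identity $\abs{1-p+p e^{2\pi i t x}}^2 = 1 - 2p(1-p)(1-\cos(2\pi t x))$ together with $1-\cos(2\pi\theta) \gtrsim \snorm{\theta}^2$ (distance to the nearest integer) yields the pointwise control $\abs{\phi_x(t)}^2 \le \exp(-c_p \sum_i \snorm{t x_i}^2)$; thus large concentration forces values of $t$ on which many $\snorm{t x_i}$ are small.

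Next, dyadically decompose the range $t \in [-1/\sqrt{n},1/\sqrt{n}]$ at scales down to $\sim 1/(nN)$ and, for each subinterval $I$ and subset $S \subseteq [n]$, record the set of $x \in \mc{A}$ for which some $t \in I$ has $\snorm{t x_i}$ small precisely on $S$. For a frozen $t$, the number of admissible $x_i \in A_i$ with $\snorm{t x_i}$ below a given threshold is governed by a standard rounding estimate of product form over the coordinates. Here the structure of $\mc{A}$ is essential: the integer-interval shape of $A_i$ for $i > \delta n$ lets one apply these rounding estimates directly, while the condition $A_i \cap [-N,N] = \emptyset$ for $i \le \delta n$ guarantees $\abs{x_i} > N$, which prevents $\snorm{t x_i}$ from being small merely because $x_i$ is small. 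Combining these coordinate-wise counts with an $\epsilon$-net for $t$ at each dyadic scale produces the per-scale count of bad $x$'s.

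Summing over the dyadic scales and subsets $S$, the hypothesis $N \le (1-p+\epsilon)^{-n}$ is exactly what is needed to drive the total count of bad vectors below $\exp(-Mn)\abs{\mc{A}}$: heuristically, the typical value of $\abs{1-p+p e^{2\pi i t x_i}}$ over uniformly random $x_i$ and $t$ is of order $1-p$, so the characteristic-function mass is of order $(1-p+o(1))^n$, and once $1/N$ exceeds this threshold the bad set shrinks superexponentially. The main obstacle is the calibration at the middle step, which requires simultaneously optimizing (i) the net cost for $t$ at each dyadic scale, (ii) the per-$t$ count of compatible $x$'s using the admissibility structure, and (iii) the Esseen lower bound; the dependence $n_{\ref{thm:inversion-of-randomness}} = \exp(C M^2)$ reflects how many iterated refinements of the count are required to reach exponent $-Mn$. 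The union-of-two-intervals case $i \le \delta n$ is particularly delicate, requiring a separate rounding analysis since Minkowski-type bounds for such sets lose a constant per coordinate if applied naively.
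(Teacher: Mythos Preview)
The paper does not prove this statement from scratch. It is quoted directly from Tikhomirov~\cite[Corollary~4.3]{Tik20}, and the only original content is the Remark following the statement, which justifies the final sentence (that $n_{\ref{thm:inversion-of-randomness}} = \exp(C M^2)$ is allowable) by \emph{tracing the explicit parameter dependencies} through Tikhomirov's Theorem~4.2, Proposition~4.5, and Proposition~4.10. Concretely: $L$ grows exponentially in $M$, hence $q$ is linear in $M$ and $\wt\epsilon$ decays like $M^{-1}$; feeding these into the requirements of \cite[Proposition~4.10]{Tik20} forces $n \gtrsim \exp(\Theta(M^2))$, and the deduction of \cite[Corollary~4.3]{Tik20} from \cite[Theorem~4.2]{Tik20} needs $n^{-1/2}\le\eta$ with $\eta$ decaying like $\exp(-\Theta(M^2))$, which is again satisfied in this regime.

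Your proposal instead sketches a direct proof of the inversion-of-randomness theorem itself. The ingredients you list (Esseen's inequality, the product formula for the characteristic function of $\sum b_i x_i$, the bound $|\phi_x(t)|^2\le\exp(-c_p\sum_i\snorm{tx_i}^2)$, dyadic decomposition in $t$, and coordinate-wise counting using the admissibility structure) are indeed the high-level building blocks of Tikhomirov's argument. But two points deserve attention. First, Tikhomirov's actual mechanism is more specific than a union bound over $(I,S)$: he \emph{randomizes} $x$ uniformly over $\mc{A}$, computes moments of the averaged characteristic function, and applies Markov; the superexponential saving comes from this averaging, not from a net over $t$ alone. Your ``per-$t$ count of compatible $x$'s'' plus ``$\epsilon$-net for $t$'' does not obviously recover the gain $\exp(-Mn)$ without that averaging step. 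Second, and more importantly for the present paper, your explanation of the $\exp(CM^2)$ dependence (``how many iterated refinements of the count are required'') is not a derivation; the actual source of the $M^2$ is the cascade of parameter choices described above, and the paper's contribution here is precisely to make that cascade explicit. If you want a self-contained argument, you would need to reproduce Tikhomirov's moment computation and then track the same parameters; the sketch as written does neither.
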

\begin{remark}
This is the same as \cite[Corollary~4.3]{Tik20}, except that we have claimed an explicit dependence between $n$ and $M$, namely that one can take $M$ growing as $(\log n)^{1/2}$ (all other parameters fixed). This is an immediate consequence of unraveling the parameter dependencies in \cite[Theorem~4.2]{Tik20}. We give a brief sketch, using the notation of \cite[Theorem~4.2]{Tik20}. In the proof of \cite[Theorem~4.2]{Tik20}, one sets $L = L_{4.5}(2M,p,\delta,\epsilon/2)$, which can be checked to grow exponentially in $M$ by examining the last line of the proof of \cite[Proposition~4.5]{Tik20}. This shows that the parameter $q$ in the proof of \cite[Theorem~4.2]{Tik20} is chosen to be linear in $M$, and hence, the parameter $\wt{\epsilon}$ grows as $M^{-1}$. Next, it is required that $n\ge n_{4.10}(p,\wt{\epsilon},\max(16\wt{R},L),\wt{R},2M)$ and $n \ge n_{4.5}(2M, p, \delta, \epsilon/2)$. The more restrictive condition comes from \cite[Proposition~4.10]{Tik20}, and indeed, an examination of the first few lines of the proof of this proposition reveals that it suffices to have $n$ growing as $\exp(\Theta(M^{2}))$. One also sees that $\eta_{4.2} = \eta_{4.10}(p,\wt{\epsilon},\max(16\wt{R},L),\wt{R},2M)$ decays as $\exp(-\Theta(M^{2}))$. Finally, the deduction of \cite[Corollary~4.3]{Tik20} from \cite[Theorem~4.2]{Tik20} requires $n^{-1/2}\le\eta$, for which $n$ growing as $\exp(\Theta(M^{2}))$ is sufficient in light of the decay of $\eta$ discussed above.
\end{remark}

\subsection{Replacement}\label{sub:replacement}
In order to relate the anticoncentration of a vector with respect to Rademacher random variables to \cref{def:threshold,def:median-threshold}, we will require the following inequality. This is closely related to the replacement trick employed by Kahn, Koml{\'o}s, and Szemer{\'e}di \cite{kahn1995probability} and later by Tao and Vu \cite{tao2007singularity} (although the application here is substantially simpler).
\begin{lemma}\label{lem:replace}
There exists an absolute constant $C_{\ref{lem:replace}}$ for which the following holds. 
Let $v\in \mb{R}^n$ and $r> 0$. Then, for any $0 < p \le (2-\sqrt{2})/4$, 
\[\mc{L}\bigg(\sum_{i=1}^{n}b_iv_i,r\bigg)\le C_{\ref{lem:replace}} \mc{L}\bigg(\sum_{i=1}^{n}b_i'v_i,r\bigg),\]
where $b_i$ are independent Rademacher random variables and $b_i'$ are distributed as $\on{Ber}(p)-\on{Ber}'(p)$.
\end{lemma}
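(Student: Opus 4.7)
The plan is to prove the replacement lemma via a characteristic-function argument that hinges critically on the hypothesis $p\le(2-\sqrt 2)/4$.

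My first step is to establish the distributional identity $b_i'\overset{d}{=}\chi_ib_i$ where $\chi_i\sim\on{Ber}(q)$ with $q:=2p(1-p)$ is independent of $b_i$: indeed, $\chi_ib_i$ equals $0$ with probability $1-q$ and each of $\pm 1$ with probability $q/2=p(1-p)$, matching the distribution of $b_i'$. A direct computation shows that $p\le(2-\sqrt 2)/4$ is exactly equivalent to $q\le 1/4$, and this bound is crucial because it ensures
\[\phi_{b_i'}(t)=1-q(1-\cos t)\ge 1-2q\ge\frac{1}{2}>0\]
for all $t$; in particular, $\phi_{S'}(t)=\prod_i\phi_{b_i'}(tv_i)\ge 0$ everywhere, and the worst-case pointwise ratio $|\cos\theta|/\phi_{b_i'}(\theta)$ stays bounded by $1/(1-2q)\le 2$.

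Second, I would reduce the problem to a comparison of integrals of characteristic functions by bounding each Lévy concentration in the Fourier domain. Esseen's smoothing inequality immediately gives
\[\mc{L}(S,r)\lesssim r\int_{|t|\le 1/r}|\phi_S(t)|\,dt,\qquad |\phi_S(t)|=\prod_i|\cos(tv_i)|.\]
For the lower bound on $\mc{L}(S',r)$ I would apply the Pólya trick using the Fejér tent kernel $g(x)=(1-|x|/r)_+\le\one_{[-r,r]}(x)$, whose Fourier transform $\hat g(t)=4\sin^2(rt/2)/(rt^2)$ is nonnegative. Since $\phi_{S'}\ge 0$, Parseval yields
\[\mc{L}(S',r)\ge\mb{E}\,g(S')=\frac{1}{2\pi}\int\hat g(t)\phi_{S'}(t)\,dt\gtrsim r\int_{|t|\le 1/r}\phi_{S'}(t)\,dt,\]
reducing the lemma to the integral inequality $\int_{|t|\le 1/r}|\phi_S(t)|\,dt\le C\int_{|t|\le 1/r}\phi_{S'}(t)\,dt$ for an absolute constant~$C$.

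For the pointwise comparison, $|\cos\theta|\le\phi_{b_i'}(\theta)$ holds trivially when $\cos\theta\ge 0$, while for $\cos\theta<0$ one has $|\cos\theta|\le 2\phi_{b_i'}(\theta)$, with equality only at $\cos\theta=-1$ and $q=1/4$. The main obstacle is that naively multiplying these pointwise bounds produces a $2^{N(t)}$ blowup, where $N(t)=|\{i:\cos(tv_i)<0\}|$ can be of order $n$. I expect the resolution to be a refined argument exploiting that the factor $2$ is attained only at isolated $\theta$: for a typical $\theta$ with $\cos\theta<0$ the actual ratio $|\cos\theta|/\phi_{b_i'}(\theta)$ is strictly less than $2$, so the blowup does not accumulate after integration. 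One natural route is to work instead with the squared inequality $\cos^2\theta\le 1-q\sin^2\theta\le 2\phi_{b_i'}(\theta)$ combined with the symmetrization step $\mc{L}(S,r)^2\le\mc{L}(S-S^*,2r)$, so that one can exploit nonnegativity of $|\phi_S|^2$ in the integrand. In any case, the threshold $p\le(2-\sqrt 2)/4$ is likely tight for the approach, since it is exactly the threshold at which $\phi_{b_i'}\ge 1/2$ and the pointwise ratio remains bounded.
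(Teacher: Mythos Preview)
Your setup is right --- Esseen above, a Fej\'er/P\'olya kernel below, and reduction to a comparison of $\int|\phi_S|$ against $\int\phi_{S'}$ --- and you correctly isolate the obstruction: the pointwise bound $|\cos\theta|\le 2\phi_{b_i'}(\theta)$ produces an unusable $2^{N(t)}$ factor. But neither of your suggested fixes closes the gap. The ``refined argument'' that the ratio $2$ is attained only at isolated points is not enough: for generic $v$ one can have $\cos(tv_i)$ near $-1$ for a positive fraction of indices over a set of $t$ of positive measure, so the product genuinely blows up. The symmetrization route does not help either: the inequality you write, $\cos^2\theta\le 2\phi_{b_i'}(\theta)$, still carries the constant $2$ per factor, and even if you managed a constant-$1$ bound there, you would only obtain $\mc{L}(S,r)^2\lesssim\mc{L}(S',r)$, which is strictly weaker than the lemma.

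The missing idea, and the one the paper uses, is a \emph{frequency-doubling} pointwise inequality with constant $1$. Since $|\cos t|\le(1+\cos^2 t)/2=(3+\cos 2t)/4$ (this is just $(|\cos t|-1)^2\ge 0$), one compares not with $\phi_{b_i'}(v_it)$ but with $\phi_{b_i'}(2v_it)=1-q(1-\cos 2v_it)$. The hypothesis $p\le(2-\sqrt2)/4$, i.e.\ $q\le 1/4$, is exactly what makes $(3+\cos 2t)/4\le 1-q(1-\cos 2t)$ hold, so that
\[
\prod_i|\cos(v_i\theta)|\le\prod_i\phi_{b_i'}(2v_i\theta)=\phi_{2S'}(\theta)
\]
pointwise with no loss. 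After Esseen this gives $\mc{L}(S,1)\lesssim\int_{-2}^2\phi_{2S'}(\theta)\,d\theta$, and the right-hand side is then converted (via the tent $\mbm{1}_{[-2,2]}\ast\mbm{1}_{[-2,2]}$ and Fourier inversion) to $\mb{E}[(\sin(4S')/(2S'))^2]$, which a direct dyadic decomposition in $|S'|$ bounds by $C\,\mc{L}(S',1)$. So the role of the threshold on $p$ is not, as you guessed, to keep $\phi_{b_i'}\ge 1/2$ and bound a ratio by $2$; it is to make the doubled-frequency comparison hold with constant exactly $1$.
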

\begin{proof}
Note that by scaling $v$, we may assume without loss of generality that $r = 1$. Let $X = \sum_{i=1}^nb_i'v_i$. By Esseen's inequality and $|\cos t|\le (3+\cos(2t))/4$, we find
\begin{align*}
\mc{L}\bigg(\sum_{i=1}^nb_iv_i,1\bigg)&\le C\int_{-2}^2\prod_{i=1}^n|\cos(v_i\theta)|d\theta\le C\int_{-2}^2\prod_{i=1}^n\bigg(\frac{3}{4}+\frac{1}{4}\cos(2v_i\theta)\bigg)d\theta\\
&\le C\int_{-2}^2 \prod_{i=1}^{n}\mb{E}\exp\bigg(i\theta \cdot  2b_i'v_i\bigg)d\theta\le 2C\int_{\mb{R}}\mbm{1}_{[-2,2]}\ast\mbm{1}_{[-2,2]}(\theta)\mb{E}\exp(i\theta(2X))d\theta\\
&= 4C\mb{E}\bigg(\frac{\sin(4X)}{2X}\bigg)^2\\
&\le 4C\mb{E}\bigg(\frac{\sin(4X)}{2X}\cdot \mbm{1}_{X\in [-1,1]}\bigg)^2 + \sum_{k=1}^{\infty}4C\mb{E}\bigg(\frac{\sin(4X)}{2X}\cdot \mbm{1}_{\pm X\in [2k-1, 2k+1]} \bigg)^2\\
&\le 16C\mc{L}(X,1) + C\sum_{k=1}^{\infty}\frac{\mc{L}(X,1)}{(2k-1)^{2}} \le C'\mc{L}(X,1).
\end{align*}
The third inequality uses $p\le (2-\sqrt{2})/4$, and the penultimate inequality uses $\sin(4x)/(2x) \le 2$ for $x \in [-1,1]$.  
\end{proof}
\subsection{Randomized rounding}\label{sub:randomized-rounding}
We will make use of a slight modification of \cite[Lemma~5.3]{Tik20}, proved using randomized rounding (cf.~\cite{Liv18}). As the proof is identical we omit the details.
\begin{lemma}\label{lem:round}
Let $y = (y_1,\ldots,y_n)\in \mb{R}^n$ be a vector, $\Delta$ be a fixed distribution supported in $[-1,1]^n$, and let $\mu>0$, $\psi\in \mb{R}$ be fixed. There exist absolute constants $c_{\ref{lem:round}}$ and $C_{\ref{lem:round}}$ for which the following holds. 

Suppose that for all $t\ge \sqrt{n}$,
\[\mb{P}\bigg[\bigg|\sum_{i=1}^nb_iy_i-\psi\bigg|\le t\bigg]\le \mu t,\]
where $(b_1,\dots, b_n)$ are independent and distributed as $\Delta$. Then, there exists a vector $y' \in \mb{Z}^{n}$ satisfying
\begin{enumerate}[(R1)]
    \item $\snorm{y-y'}_\infty\le 1$,
    \item $\mb{P}[|\sum_{i=1}^nb_iy_i'-\psi|\le t]\le C_{\ref{lem:round}}\mu t$ for all $t\ge \sqrt{n}$, and
    \item $\mc{L}(\sum_{i=1}^nb_iy_i',\sqrt{n})\ge c_{\ref{lem:round}}\mc{L}(\sum_{i=1}^nb_iy_i,\sqrt{n})$.
\end{enumerate}
\end{lemma}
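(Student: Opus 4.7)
The proof is by randomized rounding, following the strategy of Livshyts~\cite{Liv18} and Tikhomirov~\cite{Tik20}. I take $Y_1', \dots, Y_n'$ to be independent integer-valued random variables (independent also of the $b_i$'s) with $Y_i' \in \{\lfloor y_i \rfloor, \lceil y_i \rceil\}$ chosen so that $\mb{E}[Y_i'] = y_i$. Then $|Y_i' - y_i| \le 1$ deterministically, so (R1) holds for every realization, and $\on{Var}(Y_i') \le 1/4$. Writing $\sum_i b_i Y_i' = \sum_i b_i y_i + W$ with $W := \sum_i b_i(Y_i' - y_i)$, the variable $W$ is, conditional on $b$, a sum of independent centered random variables with $\on{Var}(W\mid b) \le n/4$ and sub-Gaussian tails by Hoeffding.

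To establish (R2), set $Q_b := \sum_i b_i y_i - \psi$ and decompose based on the value of $|Q_b|$:
\[\mb{E}_{Y'}\mb{P}_b[|{\textstyle\sum}_i b_i Y_i' - \psi| \le t] \le \mb{P}_b[|Q_b| \le t] + \mb{E}_b\bigl[\one_{|Q_b| > t}\,\mb{P}_{Y'}[|W| \ge |Q_b| - t \mid b]\bigr].\]
The first term is at most $\mu t$ by hypothesis; for the second, stratifying over dyadic shells $|Q_b| - t \in (2^{k-1}\sqrt{n},\,2^k\sqrt{n}]$ and using the Chebyshev/Hoeffding bound $\mb{P}_{Y'}[|W| \ge 2^{k-1}\sqrt{n}\mid b] \lesssim 2^{-2k}$ together with $\mb{P}_b[|Q_b| \le s] \le \mu s$ for $s \ge \sqrt{n}$, the resulting geometric sum gives $O(\mu t + \mu\sqrt{n}) = O(\mu t)$ since $t \ge \sqrt{n}$. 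Thus $\mb{E}_{Y'}\mb{P}_b[|\sum b_i Y_i' - \psi| \le t] \lesssim \mu t$. By monotonicity in $t$, it suffices to check (R2) at dyadic $t_k = 2^k \sqrt{n}$; using the sub-Gaussian tail of $W$ to restrict to scales where the conclusion is non-vacuous, a weighted Markov union bound then exhibits a single realization $y' = Y'$ satisfying (R2) at every $t \ge \sqrt{n}$.

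For (R3), let $x^*$ nearly attain $\mc{L}(\sum b_i y_i,\sqrt{n})$. Conditioning on $b$ and applying Chebyshev to $W$,
\[\mb{E}_{Y'}\mb{P}_b[|{\textstyle\sum}_i b_i Y_i' - x^*| \le 2\sqrt{n}] \ge \tfrac{3}{4}\,\mc{L}({\textstyle\sum}_i b_i y_i,\sqrt{n}),\]
so with probability at least $1/2$ over $Y'$ one has $\mb{P}_b[|\sum b_i Y_i' - x^*| \le 2\sqrt{n}] \ge c\,\mc{L}(\sum b_i y_i, \sqrt{n})$; covering $[x^* - 2\sqrt{n}, x^* + 2\sqrt{n}]$ by four intervals of length $\sqrt{n}$ and pigeonholing gives (R3) up to an absolute constant. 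Since the events underlying (R2) and (R3) each have probability bounded below by a positive constant, their intersection is nonempty. The main obstacle is the simultaneous control in (R2) with an absolute constant across all $t \ge \sqrt{n}$: a naive scale-by-scale Markov argument loses a factor in the number of dyadic scales, so one must use the sub-Gaussian (not merely variance) tail of $W$ to cap the effective number of scales at $O(1)$, as in the corresponding argument of \cite[Section~5]{Tik20}.
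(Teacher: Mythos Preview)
Your approach is the same randomized-rounding framework the paper invokes (it omits the proof entirely, citing \cite[Lemma~5.3]{Tik20} and \cite{Liv18}), and the expectation estimates you write down for (R2) and the lower bound $\mb{E}_{Y'}\mb{P}_b[|\sum b_iY_i'-x^*|\le 2\sqrt{n}]\ge\tfrac34\mc{L}(\sum b_iy_i,\sqrt{n})$ are correct.

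There are, however, two genuine slips in your extraction of a single realization $y'$. First, the sentence ``the events underlying (R2) and (R3) each have probability bounded below by a positive constant, their intersection is nonempty'' is not a valid inference: two events of probability $1/2$ can be disjoint. You need the two probabilities to sum to strictly more than $1$, or else to argue via a single averaged quantity. Second, and relatedly, the claim that (R3) holds ``with probability at least $1/2$ over $Y'$'' does not follow from the expectation lower bound alone: writing $Z=\mb{P}_b[|\sum b_iY_i'-x^*|\le 2\sqrt{n}]$ and $L_0=\mc{L}(\sum b_iy_i,\sqrt{n})$, the bound $\mb{E}[Z]\ge\tfrac34 L_0$ together with $Z\le 1$ only yields $\mb{P}[Z\ge\tfrac12 L_0]\ge\tfrac14 L_0$, which can be arbitrarily small. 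The standard fix is to avoid separating the two properties: one bounds $\mb{E}_{Y'}[Z\cdot\one_{(R2)\text{ holds}}]$ from below (equivalently, bounds $\mb{E}[Z\cdot\one_{(R2)\text{ fails}}]$ by a small multiple of $L_0$), which then exhibits a $y'$ in the (R2)-good set with $Z(y')\gtrsim L_0$.

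On the simultaneous control in (R2), your diagnosis that a naive dyadic Markov union loses a logarithmic factor is right, but the stated remedy---that the sub-Gaussian tail of $W$ ``caps the effective number of scales at $O(1)$''---is not accurate as written: the nontrivial scales run up to $t\sim 1/\mu$, and sub-Gaussianity of $W$ at scale $\sqrt{n}$ does not truncate this. What the sub-Gaussian tail \emph{does} buy is a sharper per-scale estimate $\mb{E}_{Y'}[p_k]\le 2\mu\,2^k\sqrt{n}+2e^{-c\,4^k}$, which, when fed into the combined-expectation argument above (rather than a scale-by-scale union bound), is what makes the extraction go through with an absolute constant. This is precisely the mechanism in \cite[Section~5]{Tik20}, so your pointer is correct even if the one-line summary is slightly off.
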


Next, we prove a version of the above proposition for the case when $\Delta = \on{Ber}(p) - \on{Ber}'(p)$ with $p$ sufficiently small. The main difference is that the left hand side in (R2) above can be replaced by the L\'evy concentration at width $t$; this can be done since for a distribution with non-negative characteristic function, the maximum concentration of given width is essentially obtained around $\psi = 0$.

\begin{lemma}\label{lem:levy-round}
Let $y = (y_1,\ldots,y_n)\in \mb{R}^n$ be a vector, $p\in(0,1)$, and let $\mu>0$, $\psi\in \mb{R}$ be fixed. There exist absolute constants $c_{\ref{lem:levy-round}}$ and $C_{\ref{lem:levy-round}}$ for which the following holds. 

Suppose that for all $t\ge \sqrt{n}$,
\[\mc{L}\bigg(\sum_{i=1}^nb_i'y_i,t\bigg)\le\mu t,\]
where the $b_i'$ are independent and distributed as $\on{Ber}(p)-\on{Ber}'(p)$. Then, there exists a vector $y' = (y_1',\dots,y_n') \in \mb{Z}^{n}$ satisfying
\begin{enumerate}[(R1)]
    \item $\snorm{y-y'}_\infty\le 1$,
    \item $\mc{L}(\sum_{i=1}^nb_i'y_i',t)\le C_{\ref{lem:levy-round}}\mu t$ for all $t\ge \sqrt{n}$, and
    \item $\mc{L}(\sum_{i=1}^nb_i'y_i',\sqrt{n})\ge c_{\ref{lem:levy-round}}\mc{L}(\sum_{i=1}^nb_i'y_i,\sqrt{n})$.
\end{enumerate}
\end{lemma}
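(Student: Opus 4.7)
The plan is to reduce to \cref{lem:round} applied with $\psi = 0$, and then upgrade the resulting pointwise small-ball bound to a L\'evy-concentration bound using the fact---hinted at in the remark preceding the statement---that the characteristic function of $X' := \sum_i b_i' y_i'$ is pointwise non-negative. Indeed, since $b_i' = B_i - B_i'$ is the symmetrization of $\on{Ber}(p)$, we have $\phi_{b_i'}(\theta) = |\phi_{B_i}(\theta)|^2 \ge 0$, and hence $\phi_{X'}(\theta) = \prod_i |\phi_{B_i}(y_i'\theta)|^2 \ge 0$.

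First, I would apply \cref{lem:round} with $\Delta = \on{Ber}(p) - \on{Ber}'(p)$ (which is supported in $[-1,1]$) and $\psi = 0$. The required hypothesis is immediate from the L\'evy hypothesis by taking $x = 0$ in the definition of $\mc{L}$. This produces $y' \in \mb{Z}^n$ satisfying (R1), (R3), and the one-point version of (R2), namely $\mb{P}[|\sum_i b_i' y_i'| \le t] \le C_{\ref{lem:round}}\mu t$ for all $t \ge \sqrt n$.

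Next, I would upgrade this one-point bound to the full L\'evy bound. Choose a positive-definite Fej\'er-type test kernel $K_t$ satisfying (i) $K_t \ge \mbm{1}_{[-t,t]}$ pointwise, (ii) $\hat K_t \ge 0$, and (iii) $K_t(u) \lesssim (t/|u|)^2$ for $|u| \ge t$; concretely, $K_t$ can be taken as a suitable multiple of $(\sin(u/(2t)))^2/u^2$, whose Fourier transform is proportional to a Fej\'er kernel. By Fourier inversion together with the non-negativity of $\hat K_t$ and $\phi_{X'}$, for any $x \in \mb{R}$ one obtains
\[\mb{P}[|X' - x| \le t] \le \mb{E}[K_t(X' - x)] = \frac{1}{2\pi}\int \hat K_t(\xi)\phi_{X'}(\xi) e^{-ix\xi}\, d\xi \le \mb{E}[K_t(X')].\]
Taking $\sup_x$ gives $\mc{L}(X', t) \le \mb{E}[K_t(X')]$. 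A dyadic decomposition of $\mb{E}[K_t(X')]$ over $\{|X'|\le t\}$ and the annuli $\{2^{k-1}t < |X'|\le 2^k t\}$ for $k \ge 1$, combined with decay estimate (iii) and the first-step bound $\mb{P}[|X'| \le 2^k t] \le C_{\ref{lem:round}}\mu \cdot 2^k t$ (valid since $2^k t \ge \sqrt n$ for every $k \ge 0$), yields a convergent geometric series summing to $\lesssim \mu t$, establishing (R2).

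The main technical point is the Fourier shift inequality $\mb{E}[K_t(X' - x)] \le \mb{E}[K_t(X')]$, the sole place where the non-negativity of $\phi_{X'}$ is essentially used, and the only step where the argument departs from the proof of \cref{lem:round}. The remaining ingredients---verifying the properties of the Fej\'er-type kernel, the reduction via $\psi = 0$, and summing the dyadic series---are routine.
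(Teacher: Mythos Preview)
Your proposal is correct and follows essentially the same approach as the paper: apply \cref{lem:round} with $\psi=0$, then use nonnegativity of $\phi_{X'}$ together with a kernel having nonnegative Fourier transform to transfer the centered small-ball bound to arbitrary shifts. The only difference is cosmetic: the paper uses the compactly supported triangle kernel $\mbm{1}_{[-1,1]}\ast\mbm{1}_{[-1,1]}$ (whose Fourier transform is $(2\sin\theta/\theta)^2$), so that $\mb{E}[K(X')]\le 2\,\mb{P}[|X'|\le 2t]$ in a single step, whereas your squared-sinc kernel has heavy tails and forces the dyadic summation.
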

\begin{proof}
We apply \cref{lem:round} to the distribution $\Delta = \on{Ber}(p)- \on{Ber}'(p)$ and $\psi = 0$. From (R2),
\[\mb{P}\bigg[\bigg|\sum_{i=1}^nb_i'y_i\bigg|\le t\bigg]\le C_{\ref{lem:round}}\mu t\]
for all $t\ge\sqrt{n}$. Now let $t\ge\sqrt{n}$ and $X = (\sum_{i=1}^nb_i'y_i')/t$, and note that $X$ has nonnegative characteristic function since $b_i'$ does. Thus, for all $\psi\in\mb{R}$,
\begin{align*}
\mb{P}[|X-\psi|\le 1]&=\mb{E}[\mbm{1}_{[-1,1]}(X-\psi)]\le\mb{E}[\mbm{1}_{[-1,1]}\ast\mbm{1}_{[-1,1]}(X-\psi)]\\
&=\int_{\mb{R}}\bigg(\frac{2\sin\theta}{\theta}\bigg)^2\mb{E}\exp(i\theta(X-\psi))d\theta\\
&\le\int_{\mb{R}}\bigg(\frac{2\sin\theta}{\theta}\bigg)^2|\mb{E}\exp(i\theta X)|d\theta\\
&=\int_{\mb{R}}\bigg(\frac{2\sin\theta}{\theta}\bigg)^2\mb{E}\exp(i\theta X)d\theta\\
&= \mb{E}[\mbm{1}_{[-1,1]}\ast\mbm{1}_{[-1,1]}(X)]\le 2\mb{P}[|X|\le 2]\le 4C_{\ref{lem:round}}\mu t.\qedhere
\end{align*}
\end{proof}

\subsection{Threshold structure theorem}\label{sub:threshold-structure}
We now prove the following improved version of \cref{thm:unstructured-kernel}. 
\begin{theorem}\label{thm:threshold-kernel}
Fix $K\ge 1$ and $0 < p\le (2-\sqrt{2})/4$. We can choose $L, c > 0$ and $c' = c'(p)$ so that the following holds for sufficiently large $n$. For all $\lambda\in (n^{-2/3},c(\log n)^{1/4}n^{-1/2})$, we have for any $u \in \mb{R}^{n}$ that
\[\mb{P}[\exists v\in\mb{S}^{n-1}: (Av\parallel u)\wedge(v\in\on{Comp}(c_0,c_1)\vee\wh{\mc{T}}_{p,L}(v,\lambda)\ge 2^{-c'\lambda n})\wedge(\snorm{A}\le K\sqrt{n})]\le 2e^{-cn},\]
where $A$ is a symmetric matrix with entries on and above the diagonal i.i.d.~and distributed as the sum of a Rademacher random variable, and a Gaussian random variable of mean $0$ and variance $n^{-2n}$.
\end{theorem}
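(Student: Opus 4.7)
The proof follows the template of \cref{thm:unstructured-kernel}: compressible vectors are handled by \cref{lem:compressible-singular-vector} (applied to an $O(1)$-net of scalar multiples $tu$ with $|t| \le K\sqrt{n}$), and for incompressible structured $v$ one constructs a net for the ``structured'' part of the sphere and applies a small-ball estimate. The essential change is that nets for structured vectors are now built using Tikhomirov's inversion-of-randomness (\cref{thm:inversion-of-randomness}) in place of the LCD-based \cref{lem:nets-sublevel}. The $(\log n)^{1/4}$ improvement relative to the $1/\sqrt{n}$ range in \cref{thm:unstructured-kernel} comes from the ability, recorded in the remark after \cref{thm:inversion-of-randomness}, to take the loss $\exp(-Mn)$ with $M$ as large as $\Theta((\log n)^{1/2})$.

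Consider $v \in \on{Incomp}(c_0, c_1)$ with $\wh{\mc{T}}_{p,L}(v,\lambda) \ge 2^{-c'\lambda n}$. By the definition of the median, at least half of the $k/r$ blocks $I_j = \on{Spread}_\lambda^j(v)$ (where $r = \lceil \lambda n \rceil$ and $k = c_{\ref{def:lambda-spread}}(\lambda)n$) satisfy $\mc{T}_{p,L}(v_{I_j}/\snorm{v_{I_j}}_2) \ge 2^{-c'\lambda n}$. I union bound over: (i) the choice of $\on{Spread}(v)$ (cost $2^n$); (ii) which blocks are good (cost $2^{k/r}$); and (iii) a dyadic witness scale $\tau_j \in [2^{-c'\lambda n}, 1/L]$ per good block. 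For each good $j$, $\tau_j$ witnesses $\mc{L}(\sum_i b_i' (w_j)_i, \tau_j) > L \tau_j$, where $w_j = v_{I_j}/\snorm{v_{I_j}}_2$. Setting $N_j = \sqrt{r}/\tau_j$, I apply \cref{lem:levy-round} to $N_j w_j$ to obtain $y_j' \in \mb{Z}^r$ with $\snorm{N_j w_j - y_j'}_\infty \le 1$ and $\mc{L}(\sum_i b_i' y_{j,i}', \sqrt{r}) \gtrsim L\sqrt{r}/N_j$. The standard convolution bound $\mc{L}(\sum b_i y, t) \ge \mc{L}(\sum b_i' y, t)$ transfers this to the $\on{Ber}(p)$ sum, matching the hypothesis of \cref{thm:inversion-of-randomness}. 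Incompressibility of $v$ places $y_j'$ in an admissible product set (\cref{def:admissible-set}) of cardinality $\le (K_0 N_j)^r$, and \cref{thm:inversion-of-randomness} with $M = c_\star (\log n)^{1/2}$ bounds the number of admissible $y_j'$ satisfying the required lower concentration bound by $\exp(-Mr)(K_0 N_j)^r$.

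Combining the per-block lattice counts with a standard $\beta$-net of the complementary coordinates (inside $\mb{B}_2$, with $\beta$ of order $1/N_j$) produces a net $\mc{N}$ for $v$. The small-ball probability $\mb{P}[\snorm{Av_0 - tu}_2 \le K\beta\sqrt{n}]$ for each $v_0 \in \mc{N}$ is then bounded via \cref{lem:tensorization} together with a per-coordinate anticoncentration estimate for the row-vector products (converting $\on{Ber}-\on{Ber}'$ to Rademacher via \cref{lem:replace}). The principal difficulty, and the source of the $(\log n)^{1/4}$ factor, is the balancing step: the net-size exponent grows like the product of the number of good blocks and the per-block admissible-set logarithm, which is at most $\Theta(c'\lambda n^2)$, while the Tikhomirov savings total $\Theta(Mn)$ aggregated across all good blocks. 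The requirement that these (together with the small-ball contribution and the various $O(1)$-exponent union bounds) close to $\exp(-cn)$ forces $c'\lambda n \lesssim (\log n)^{1/2}$ and, after careful accounting of the small-ball decay through a net at scale $\beta \sim 1/N_j$, yields the stated upper bound $\lambda \le c(\log n)^{1/4}/\sqrt{n}$; making the analogous choice $c' \ll c_\star$ ensures the admissible lattice sits in the regime $N_j \le (1 - p + \epsilon)^{-r}$ required by \cref{thm:inversion-of-randomness}. The Gaussian perturbation of variance $n^{-2n}$ added to $A$ does not affect any of the estimates but regularizes the discretization of the event $Av \parallel u$.
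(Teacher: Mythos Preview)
Your overall template matches the paper's: compressible vectors via \cref{lem:compressible-singular-vector}, then a net/small-ball argument with Tikhomirov's inversion-of-randomness replacing the LCD-based counting. However, there is a genuine structural difference that creates a gap.

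\textbf{The difference.} The paper does a \emph{single} dyadic decomposition on the median threshold, writing $\wh{\mc{T}}_{p,L}(v,\lambda)\in[1/T,2/T]$, and then works at one common scale $D=C_1\sqrt{n}T$ for all blocks. It selects the half of the blocks with threshold $\le 2/T$ (those at or below the median), applies \cref{lem:levy-round} to $y_{I_i}=Dv_{I_i}$, and uses the resulting integer points $y'_{I_i}$ for \emph{both} the net count and the small-ball estimate. You instead take the blocks with threshold $\ge 2^{-c'\lambda n}$ (those at or above the median), assign each a separate dyadic witness $\tau_j$ and scale $N_j=\sqrt{r}/\tau_j$, and assemble a product net.

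\textbf{The gap.} The small-ball step does not go through as you have written it. For each net point $v_0$ you must bound $\mb{P}[\snorm{Av_0-tu}_2\le K\beta\sqrt{n}]$ via \cref{lem:tensorization}, which requires per-row \emph{anticoncentration} of sums $\sum_i A_{ji}(v_0)_i$; that in turn requires an \emph{upper} bound on the L\'evy concentration of $\sum_i b_i'(v_0)_i$ restricted to suitable blocks. Your rounded points $y_j'$ on the ``good'' blocks are produced using only (R1) and (R3) of \cref{lem:levy-round} --- the \emph{lower} bound on L\'evy concentration, which is exactly what Tikhomirov needs but the opposite of what the small-ball step needs. Property (R2) would supply the upper bound, but its hypothesis is $\mc{L}(\sum b_i' N_j(w_j)_i,t)\le\mu t$ for all $t\ge\sqrt{r}$, i.e.\ threshold of $w_j$ at most $\tau_j$; your $\tau_j$ is merely a witness with $\mc{L}(\cdot,\tau_j)>L\tau_j$, hence $\tau_j\le$ threshold, which is the wrong direction. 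The complementary coordinates, covered only by a standard volumetric $\beta$-net, carry no anticoncentration information either. So neither part of your net gives what the tensorization step needs.

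The paper's point is precisely that the blocks with threshold $\le 2/T$ satisfy the \emph{hypothesis} of \cref{lem:levy-round}, so the rounded $y'_{I_i}$ inherit (R2) (this is what drives Step~3), while (R3) simultaneously furnishes the L\'evy lower bound used to place $y'_{I_i}$ inside the exceptional set of \cref{thm:inversion-of-randomness} (this is Step~2). Working at a single scale $D$ also avoids the bookkeeping problem of reconciling per-block scales $N_j$ into a coherent $\beta$-net. If you want to keep a per-block decomposition, you would need to take $\tau_j$ to be a dyadic approximation of the \emph{actual threshold} of block $j$ (so that both the upper and lower L\'evy bounds hold at neighboring scales), and then carefully stitch the heterogeneous scales through \cref{lem:miroshnikov-rogozin}; but this is substantially more than what your sketch contains, and the single-scale route is both simpler and what the paper does.
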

\begin{remark}
The Gaussian perturbation of the entries of $A$ is not important here, and will only be used later, where it will be convenient to assume that various sub-matrices of $A$ are invertible almost surely. Moreover, the variance of the Gaussian is chosen sufficiently small so that all anticoncentration claims that we need are essentially unaffected by this perturbation.
\end{remark}
\begin{proof}
As in the proof of \cref{thm:unstructured-kernel}, we can deal with compressible vectors using \cref{lem:compressible-singular-vector}. Therefore, it remains to deal with incompressible vectors with ``large'' median threshold.  

By standard small-ball estimates for incompressible vectors (see \cite[Lemma~5.1]{Tik20}), for $v\in\on{Incomp}(c_0,c_1)$, there is $C_0 = C_0(p,c_0,c_1)$ such that $\wh{\mc{T}}_{p,L}(v,\lambda)\le C_0(\lambda n)^{-1/2}$. We let $r = \lceil\lambda n\rceil$ and $k = c_{\ref{def:lambda-spread}}(\lambda)n$, so that $r|k$ by definition, and let $m = \lfloor k/(2r)\rfloor$.

\textbf{Step 1: Randomized rounding}. We consider the case $\wh{\mc{T}}_{p,L}(v,\lambda)\in[1/T,2/T]$, where $T\in[C_0^{-1}\sqrt{\lambda n},2^{c'\lambda n}]$. Then, by definition, there exist intervals $I_1,\ldots,I_m$ of the form $\on{Spread}_\lambda^j(v)$ with
\[{\mc{T}}_{p,L}(v_{I_i}/\snorm{v_{I_i}}_2)\le 2/T\]
for all $i\in[m]$. 

Let $D = C_1\sqrt{n}T$, where $C_1 = C_1(p,c_0,c_1)\ge 1$ will be an integer chosen later. Let $y = Dv$. By the definition of the threshold, for all $t\ge\sqrt{\lceil\lambda n\rceil}$ we have
\begin{align*}
\mc{L}\bigg(\sum_{j\in I_i}b_j'y_j,t\bigg) &= \mc{L}\bigg(\sum_{j\in I_i}b_j'v_j,\frac{t}{D}\bigg) = \mc{L}\bigg(\sum_{j\in I_i}b_j'\frac{v_j}{\snorm{v_{I_i}}_{2}},\frac{t}{D\snorm{v_{I_i}}_{2} }\bigg)\\
&\le\mc{L}\bigg(\sum_{j\in I_i}b_j'\frac{v_j}{\snorm{v_{I_i}}_2},\frac{\sqrt{2}t}{c_1C_1\sqrt{\lceil\lambda n\rceil}T}\bigg)\le\frac{L}{T}\cdot\frac{2t}{\sqrt{\lceil\lambda n\rceil}},
\end{align*}
as long as we chose $C_1 > \sqrt{2}/c_1$, where the $b_i'$ are independent random variables distributed as $\on{Ber}(p)-\on{Ber}'(p)$. Applying \cref{lem:levy-round} to the $\lceil\lambda n\rceil$-dimension vector $y_{I_i}$, we see that there is $y_{I_i}'\in\mb{Z}^{\lceil\lambda n\rceil}$ satisfying the conclusions of \cref{lem:levy-round} (with $n$ replaced by $\lceil\lambda n\rceil$). In particular, by (R3), we see that
\begin{align}
\label{eq:lower-bound-levy}
    \mc{L}\bigg(\sum_{j\in I_i}b_j'y_j', \sqrt{\lceil\lambda n\rceil}\bigg) 
    &\ge c_{\ref{lem:levy-round}}\mc{L}\bigg(\sum_{j\in I_i}b_j'v_j, \sqrt{\lambda}/(C_1T)\bigg) \nonumber\\
    & \ge c_{\ref{lem:levy-round}}\mc{L}\bigg(\sum_{j\in I_i}b_j'\frac{v_j}{\snorm{v_{I_i}}_{2}}, \frac{2\sqrt{\lambda}}{C_1T\cdot \sqrt{\lambda/c_0}}\bigg) \nonumber\\
    &\ge C_1^{-1}\sqrt{c_0}\cdot c_{\ref{lem:levy-round}}\mc{L}\bigg(\sum_{j\in I_i}b_j'\frac{v_j}{\snorm{v_{I_i}}_2}, 2/T\bigg) \nonumber\\
    &\ge C_1^{-1}\sqrt{c_0}\cdot c_{\ref{lem:levy-round}}\cdot 2LT^{-1}.
\end{align}
Let $I_0 = [n]\setminus (I_1 \cup \dots \cup I_m)$. Then, by approximating each coordinate of $y_{I_0}$ by the nearest integer, and combining with the above integer approximations of $y_{I_1},\dots, y_{I_m}$, we obtain an integer vector $y'\in\mb{Z}^n$.

\textbf{Step 2: Size of nets of level sets.} We now estimate the number of possible realizations $y'$. This is the analogue of \cref{lem:nets-sublevel} in the present context. By paying an overall factor of at most $6^n$, we may fix $\on{Spread}(v)$ (hence all the $\on{Spread}_\lambda^j(v)$), as well as which $\on{Spread}_\lambda^j(v)$ are in $\mc{I}_M(v)$ and $\mc{J}_M(v)$. As above, let us denote the intervals $\on{Spread}_\lambda^j(v)$ in $\mc{I}_M(v)$ by $I_1,\dots, I_m$, and let $I_0 = [n]\setminus (I_1\cup \dots \cup I_m)$. 

First, note that the number of choices for $y'_{I_0}$ is at most $(CD/\sqrt{n})^{|I_0|}$ for an absolute constant $C$ -- this follows since $y'_{I_0}$ is an integer point in a ball of radius $D \ge \sqrt{n} \ge \sqrt{|I_0|}$ (provided that $C_1$ is chosen sufficiently large), at which point, we can use a standard volumetric estimate for the number of integer points in $\mathbb{R}^{I_0}$ in a ball of radius $R \ge \sqrt{|I_0|}$, together with the bound $|I_0| \ge n/2$.  

Next, we fix $i \in [m]$, and bound the number of choices for $y'_{I_i}$. Note that for any $r \ge 0$,
\[\mc{L}\left(\sum_{j \in I_i}b_j y'_j, r\right) \ge \mc{L}\left(\sum_{j \in I_i}(b_j - \wt{b}_j) y'_j, r\right),\]
where $b_i, \wt{b}_i$ are independent copies of $\on{Ber}(p)$. Since $b'_j$ is distributed as $b_j - \wt{b}_j$, it follows from \cref{eq:lower-bound-levy} that
\[\mc{L}\bigg(\sum_{j\in I_i}b_jy_j',\sqrt{\lceil\lambda n\rceil}\bigg)\ge c_2LN^{-1},\]
where $b_j$ are i.i.d.~$\on{Ber}(p)$ random variables. From the definition of $\on{Spread}_{\lambda}(v)$ and (R1), we see that $y_{I_i}'$ lies within a $(D/(C_2\sqrt{n}),\lceil\lambda n\rceil,K',1)$-admissible set $\mc{A}$ for $C_2$ and $K'$ sufficiently large depending on $c_0, c_1$. Then, for $L$ sufficiently large depending on $c_0, c_1, p$, by \cref{thm:inversion-of-randomness} (noting that $D$ is bounded by $n2^{c'\lambda n}$ for all sufficiently large $n$, and that we can take $c'$ to be sufficiently small depending on $p$), we deduce that the number of potential $y_{I_i}'\in\mb{Z}^{I_i}$ is bounded by
\[\exp(-M|I_i|)(CD/\sqrt{n})^{|I_i|},\]
where $C$ depends on $c_0, c_1$, and $M$ grows as $\sqrt{\log\lceil\lambda n\rceil}$, hence as $\sqrt{\log n}$. Explicitly, we can pick $M\ge c_3\sqrt{\log n}$ for some small $c_3 > 0$ depending only on $c_0, c_1, p$. Multiplying the total number of possibilities for $y'_0,y'_1,\dots, y'_m$, we see that the total number of possibilities for $y' \in \mb{Z}^{n}$ is at most
\[\exp(-c_{\ref{lem:spread}}Mn/8)(CD/\sqrt{n})^n,\]
for $C$ depending on $c_0, c_1$ and $M \ge c_3 \sqrt{\log{n}}$ with $c_3$ depending on $c_0, c_1, p$.

\textbf{Step 3: Small-ball probability for net points.} Fix $y' \in \mb{Z}^{n}$ resulting from the randomized rounding process and $u \in \mathbb{R}^{n}$. Our goal is to bound $\mb{P}[\snorm{Ay'-u}_{2} \le Kn]$. As in the proof of \cref{lem:small-ball}, we can without loss of generality permute the coordinates of $y'$ so that $I_1,\ldots,I_m$ are the first $m$ blocks of size $\lceil\lambda n\rceil$ within $[n]$. Then, for all $\lceil\lambda n\rceil\le j\le c_{\ref{def:lambda-spread}}(\lambda)n$, we have for all $\epsilon \ge 0$ that
\[\mc{L}((Ay'-u)_j|(Ay'-u)_{j+1,\ldots,n},D\epsilon)\le CL\bigg(\frac{\epsilon}{\sqrt{j/n}}+\frac{\sqrt{\lambda n/j}}{{T}}\bigg),\]
where $C$ is an absolute constant. 
To deduce this, we use that the first $j$ elements of row $j$ are independent of rows $j+1,\dots, n$, then use \cref{lem:replace} to replace the Rademacher entries of $A$ (plus the small Gaussian perturbation, which has variance so small that it can be disregarded) by $\on{Ber}(p)-\on{Ber}'(p)$, and finally use \cref{lem:miroshnikov-rogozin} (as in the proof of \cref{prop:anticoncentration}) to stitch together the L\'evy concentration properties of each $y_{I_i}'$ (guaranteed by (R3) of \cref{lem:levy-round}). Combining this with \cref{lem:tensorization}, we see that for $y', u$ as above,
\[\mb{P}[\snorm{Ay'-u}_{2} \le Kn] 
\le \bigg(\frac{C''L\sqrt{n}}{D}\bigg)^{n-\lceil\lambda n\rceil},\]
where $C''$ depends only on $c_0, c_1, p$.

\textbf{Step 4: Union bound.} On the event $\snorm{A}\le K\sqrt{n}$, $Av = tu$ with $\snorm{tu}_{2} \le K\sqrt{n}$. By splitting the range $\{tu\}$ into $(4D/\sqrt{n})^{2}$ intervals, and rounding $v$ as in Step 2, we see that the probability of the event in question is bounded above by 
\[\exp(-c_{\ref{lem:spread}}Mn/8)\bigg(\frac{CD}{\sqrt{n}}\bigg)^{n+2}\sup_{y',u}\mb{P}[\snorm{Ay'-u}_2\le Kn],\]
where the supremum is over $u \in \mb{R}^{n}$ and $y' \in \mb{Z}^{n}$ such that each $y_{I_i}'$ for $i\in[m]$ satisfies the conclusions of \cref{lem:levy-round} (with $n$ replaced by $\lceil\lambda n\rceil$). Controlling the final factor by Step 3, we see that the probability is bounded above by
\[\exp(-c_{\ref{lem:spread}}Mn/8)C^nD^{\lceil\lambda n\rceil+2},\]
where $C$ depends on $c_0, c_1, p$.
Finally, since $D\le 2^{2c'\lambda n}$ for all $n$ sufficiently large (depending on $c_0, c_1, p$), we obtain an overall upper bound of
\[\exp(-c_{\ref{lem:spread}}Mn/8 + n\log C + 2c'\lambda^2n^2 + 6c'\lambda n).\]
Since $M \ge c_3 \sqrt{\log{n}}$, for $c_3$ depending on $c_0, c_1, p$, the desired result follows by choosing 
$\lambda < c(\log n)^{1/4}n^{-1/2}$ for $c$ sufficiently small depending on $c_0, c_1, p$, so that the quantity above is bounded by  $\exp(-\Omega(n(\log n)^{1/2}))$.
\end{proof}

\section{Proof of \texorpdfstring{\cref{thm:main}}{Theorem 1.1}}\label{sec:deriving}

In this section (along with \cref{app:vershynin}), we complete the proof of \cref{thm:main} by closely following \cite{Ver14} with appropriate modifications. Since the smallest singular value is a continuous function of the entries of the matrix, by perturbing each entry of the random matrix by a Gaussian variable with arbitrarily small variance, one may assume that $\xi$ is absolutely continuous with respect to the Lebesgue measure; in particular, one may freely assume that various square matrices whose entries are independent copies of $\xi$ are invertible.

\subsection{Quadratic  small-ball probabilities}\label{sub:reduction}
To prove \cref{thm:main,thm:main-2}, we need the following small-ball inequalities for quadratic forms. The derivation is almost identical to the approach in \cite[Theorem~8.1]{Ver14}, with improvements coming from \cref{thm:unstructured-kernel} and \cref{prop:anticoncentration} (respectively \cref{thm:threshold-kernel}). We include details in the appendix for the reader's convenience.
\begin{theorem}\label{thm:quadratic-form-small-ball}
Let $A$ be an $n\times n$ symmetric random matrix whose independent entries are identical copies of a sub-Gaussian random variable $\xi$ with variance $1$. Suppose $X$ is a random vector (independent of $A$) whose entries are independent copies of $\xi$. Then, for every $\epsilon\ge 0$ and $u\in\mb{R}$, we have
\[\mb{P}\bigg[\frac{|\sang{A^{-1}X,X}-u|}{\sqrt{1+\snorm{A^{-1}X}_2^2}}\le\epsilon\wedge\snorm{A}\le K\sqrt{n}\bigg]\le C_{\ref{thm:quadratic-form-small-ball}}\epsilon^{1/8}+2\exp(-c_{\ref{thm:quadratic-form-small-ball}}n^{1/2}).\]
\end{theorem}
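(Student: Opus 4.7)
The plan is to closely follow the approach of \cite[Theorem~8.1]{Ver14}, replacing their RLCD-based structural and anticoncentration ingredients with \cref{thm:unstructured-kernel} and \cref{prop:anticoncentration} respectively in order to sharpen the exponential term. The key reduction turns the quadratic-form anticoncentration question into a smallest-singular-value question for an $(n+1)\times(n+1)$ symmetric random matrix.

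Concretely, consider the augmented symmetric matrix
\[\tilde A := \begin{pmatrix} u & X^T \\ X & A \end{pmatrix}.\]
Setting $Y = A^{-1}X$, a direct Schur-complement calculation gives
\[\tilde A \begin{pmatrix} 1 \\ -Y \end{pmatrix} = \begin{pmatrix} u - \langle A^{-1}X,X\rangle \\ 0\end{pmatrix},\]
so the unit vector $\hat v := (1,-Y)/\sqrt{1+\snorm{Y}_2^2}$ satisfies $\snorm{\tilde A \hat v}_2 = |\langle A^{-1}X,X\rangle - u|/\sqrt{1+\snorm{Y}_2^2}$. Thus the event of interest implies $s_{n+1}(\tilde A)\le \epsilon$, and it suffices to upper bound this probability. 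The matrix $\tilde A$ is symmetric with i.i.d.~sub-Gaussian entries on and above the diagonal except for the deterministic $(1,1)$-entry $u$; as remarked in the introduction, our MRLCD-based machinery tolerates this modification with only cosmetic changes, and one may further restrict attention to the event $\snorm{\tilde A}\lesssim \sqrt{n}$ since otherwise $\hat v$ cannot be an approximate null vector of a bounded-operator-norm matrix.

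With this reduction in hand, the argument proceeds via the Rudelson--Vershynin invertibility-via-distance framework applied to $\tilde A$. We decompose approximate null vectors into compressible vectors (handled by \cref{lem:compressible-singular-vector}) and incompressible vectors; for the latter, \cref{thm:unstructured-kernel} applied with $\lambda\asymp n^{-1/2}$ shows that, off an event of probability $\exp(-\Omega(n^{1/2}))$, every incompressible approximate null vector $v$ satisfies $\wh{MD}_L(v,\lambda)\ge 2^{\lambda n/C}=\exp(\Omega(n^{1/2}))$. Conditional on this good event, \cref{prop:anticoncentration} then yields small-ball probability $O(L\epsilon + \exp(-\Omega(n^{1/2})))$ for each base row inner product $\langle \tilde A_k, n_k\rangle$, where $n_k$ is the unit normal to the span of the other rows of $\tilde A$. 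Combining this with a careful net/union bound over dyadic level sets of the MRLCD (via \cref{lem:nets-sublevel}), balanced exactly as in \cite[Theorem~8.1]{Ver14}, gives the advertised bound $C_{\ref{thm:quadratic-form-small-ball}}\epsilon^{1/8} + 2\exp(-c_{\ref{thm:quadratic-form-small-ball}}n^{1/2})$.

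The main technical subtlety lies in the distinguished first coordinate of $\hat v$: since it is a priori large whenever $\snorm{Y}_2$ is small, and since the $(1,1)$-entry of $\tilde A$ is deterministic, one has to split into the regimes $\snorm{Y}_2\gtrsim 1$ (where the MRLCD analysis applies cleanly to the remaining $n$ coordinates of $\hat v$) and $\snorm{Y}_2\ll 1$ (where $\hat v\approx e_1$, making $\tilde A\hat v\approx (u,X^T)^T$ and allowing a direct analysis via anticoncentration of the coordinates of $X$). This case split, combined with the parameter balancing that produces the $\epsilon^{1/8}$ exponent -- a consequence of the tradeoff between net sizes and small-ball probabilities across regimes of the MRLCD -- is the genuinely delicate part, and is why the detailed proof is deferred to an appendix rather than included inline.
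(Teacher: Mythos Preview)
Your proposal has a genuine circularity. The Schur-complement identity you wrote is exactly the one the paper uses in Section~5.2, but in the \emph{opposite} direction: it reduces the singular value bound for the full symmetric matrix \emph{to} the quadratic-form small-ball statement. You are proposing to run it backwards, reducing \cref{thm:quadratic-form-small-ball} to a bound on $s_{n+1}(\tilde A)$, and then to prove that bound via invertibility-via-distance. But for a \emph{symmetric} matrix the row $\tilde A_k$ is not independent of the span $H_k$ of the remaining rows, so the normal $n_k$ depends on $\tilde A_k$ and \cref{prop:anticoncentration} cannot be applied to $\langle \tilde A_k, n_k\rangle$ as a linear form in independent variables. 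Indeed, $\on{dist}(\tilde A_k,H_k)$ is \emph{equal} (by the same Schur formula) to a quadratic form of precisely the type you are trying to control. Your argument therefore assumes its own conclusion.

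The paper's route avoids this loop by never passing to $s_{n+1}(\tilde A)$. Instead it attacks the quadratic form directly via the decoupling inequality (\cref{lem:decoupling-quadratic-forms}): one splits $[n]=J\sqcup J^c$ randomly, squares the probability, and reduces to a \emph{bilinear} form $\langle A^{-1}P_{J^c}(X-X'), P_JX\rangle$ in which $P_JX$ is genuinely independent of the other factor. Conditioning on $P_{J^c}X,P_{J^c}X'$, one then applies \cref{thm:unstructured-kernel} to $x_0=A^{-1}P_{J^c}(X-X')/\snorm{\cdot}_2$ to force large MRLCD, and \cref{prop:anticoncentration} to the now-linear form $\langle x_0,P_JX\rangle$. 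The $\epsilon^{1/8}$ does not arise from any net/small-ball tradeoff over MRLCD level sets (that tradeoff lives entirely inside the proof of \cref{thm:unstructured-kernel} and produces only the $e^{-cn}$ term); it comes from the squaring in decoupling together with the auxiliary parameter $\epsilon_0$ in \cref{prop:denominator-size} controlling the ratio $\snorm{A^{-1}}_{\on{HS}}/\snorm{A^{-1}P_{J^c}(X-X')}_2$, optimized at $\epsilon_0=\epsilon^{1/2}$.
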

We similarly derive the following strengthening for Rademacher entries.
\begin{theorem}\label{thm:quadratic-form-small-ball-rademacher}
Let $A$ be an $n\times n$ symmetric random matrix whose independent entries are distributed as the sum of a Rademacher random variable and a centered Gaussian with variance $n^{-2n}$. Suppose $X$ is a random vector (independent of $A$) whose entries are independent Rademachers. Then, for all sufficiently large $n$, and for every $\epsilon\ge 0$ and $u\in\mb{R}$, we have
\[\mb{P}\bigg[\frac{|\sang{A^{-1}X,X}-u|}{\sqrt{1+\snorm{A^{-1}X}_2^2}}\le\epsilon\wedge\snorm{A}\le K\sqrt{n}\bigg]\le C_{\ref{thm:quadratic-form-small-ball}}\epsilon^{1/8}+2\exp(-c_{\ref{thm:quadratic-form-small-ball}}n^{1/2}(\log n)^{1/4}).\]
\end{theorem}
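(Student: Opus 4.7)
The plan is to mirror the proof of Theorem~\ref{thm:quadratic-form-small-ball} (given in the appendix, following~\cite[Section~8]{Ver14}) with two substitutions: (i) the MRLCD structure theorem~\ref{thm:unstructured-kernel} is replaced by the stronger median threshold structure theorem~\ref{thm:threshold-kernel}, and (ii) the MRLCD-based anticoncentration (Proposition~\ref{prop:anticoncentration}) is replaced by an analog in which $\wh{MD}_L(v,\lambda)$ is replaced by $1/\wh{\mc{T}}_{p,L}(v,\lambda)$. The Gaussian perturbation of variance $n^{-2n}$ in the hypothesis is precisely what is required to apply Theorem~\ref{thm:threshold-kernel}, and its magnitude is negligible at every scale relevant to the argument.

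First, I would carry out Vershynin's reduction from the quadratic form $\langle A^{-1}X,X\rangle$ to a linear form in a Rademacher vector. Conditioning on an appropriate principal submatrix $H$ of $A$, together with its complementary entries and a subset of coordinates of $X$, the event $|\langle A^{-1}X,X\rangle-u|/\sqrt{1+\snorm{A^{-1}X}_2^2}\le\epsilon$ can, up to an additive constant and a harmless overall scaling, be recast as the event that a linear combination $\sum_i v_i Y_i$ lies in an $O(\epsilon)$-window, where $Y$ is a Rademacher vector and $v\in\mb{S}^{n-1}$ is measurable with respect to the conditioned data. This step is essentially identical to the one in the appendix for Theorem~\ref{thm:quadratic-form-small-ball} and depends only on block-matrix (Schur complement) algebra rather than on the entry distribution.

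Next, I would choose $\lambda=c(\log n)^{1/4}n^{-1/2}$, saturating the range in Theorem~\ref{thm:threshold-kernel}, and invoke that theorem to conclude that, except with probability $2e^{-cn}$, the vector $v$ either lies in $\on{Comp}(c_0,c_1)$ or satisfies $\wh{\mc{T}}_{p,L}(v,\lambda)\le 2^{-c'\lambda n}$. Compressible $v$ are handled via Lemma~\ref{lem:compressible-singular-vector}. For incompressible $v$ with small median threshold, I would derive a linear anticoncentration bound by a median threshold analog of Proposition~\ref{prop:anticoncentration}: apply the definition of $\mc{T}_{p,L}$ block-by-block on the sets in $\mc{I}_M(v)$, use Lemma~\ref{lem:replace} to convert the $\on{Ber}(p)-\on{Ber}'(p)$ anticoncentration into Rademacher anticoncentration, and stitch the blocks together via Miroshnikov--Rogozin (Lemma~\ref{lem:miroshnikov-rogozin}). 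The resulting bound on $\mc{L}(\sum_i v_i Y_i,\epsilon)$ has the shape $O(L\epsilon/\sqrt{\lambda}+L\cdot 2^{-c'\lambda n}\sqrt{\lambda})$, which, after the final tensorization step carried out exactly as in Theorem~\ref{thm:quadratic-form-small-ball}, yields $C\epsilon^{1/8}+2\exp(-c\lambda n)$ on the right-hand side.

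The main obstacle is bookkeeping: verifying that the parameters in the Vershynin reduction can be chosen so that the eventual exponent depends on $\lambda n$ rather than any smaller quantity, and hence that saturating $\lambda n=c(\log n)^{1/4}\sqrt{n}$ produces precisely the claimed $\exp(-c n^{1/2}(\log n)^{1/4})$ term. A secondary technical point is to confirm that the $n^{-2n}$ Gaussian perturbation of the Rademacher entries does not disrupt the replacement lemma or the threshold estimates; both follow trivially since at every relevant scale the perturbation is doubly exponentially small and can be absorbed into negligible additive error. Modulo these accounting checks, the argument runs in parallel to Theorem~\ref{thm:quadratic-form-small-ball}, with the Rademacher-specific Theorem~\ref{thm:threshold-kernel} yielding the $(\log n)^{1/4}$ improvement in the exponent.
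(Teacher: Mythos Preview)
Your proposal is correct and matches the paper's approach: the paper explicitly notes that the proof of Theorem~\ref{thm:quadratic-form-small-ball-rademacher} is identical to that of Theorem~\ref{thm:quadratic-form-small-ball} except that one uses Theorem~\ref{thm:threshold-kernel} in place of Theorem~\ref{thm:unstructured-kernel}, the threshold analogue of Proposition~\ref{prop:anticoncentration}, and takes $\lambda$ up to $c(\log n)^{1/4}n^{-1/2}$. One minor imprecision: your description of the reduction as ``block-matrix (Schur complement) algebra'' conditioning on a principal submatrix is not quite what happens in the appendix---the reduction is via the decoupling Lemma~\ref{lem:decoupling-quadratic-forms} with two independent copies $X,X'$ and a random $J$, producing the unit vector $x_0=A^{-1}P_{J^c}(X-X')/\snorm{A^{-1}P_{J^c}(X-X')}_2$ against $P_JX$; the Schur complement identity is used earlier, in the passage from $\on{dist}(A_1,H_1)$ to the quadratic form.
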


\subsection{Putting it together}\label{sub:conclusion}
Given the above, the proofs of \cref{thm:main,thm:main-2} follows from a modification (due to Vershynin) of the invertibility-via-distance paradigm due to Rudelson and Vershynin. We reproduce the details from \cite{Ver14} for the reader's convenience
\begin{proof}[Proof of \cref{thm:main,thm:main-2}]
Fix $c_0, c_1,c \in (0,1)$, as guaranteed by \cref{lem:compressible-singular-vector}. We can clearly assume that $\epsilon \le c$.   
Then, by the union bound and \cref{lem:compressible-singular-vector}, we have
\begin{align*}
\mb{P}[s_n(A)\le\epsilon/\sqrt{n}]&\le\mb{P}[\exists v\in\on{Comp}(c_0,c_1): \snorm{Av}_2\le c\sqrt{n}] + \mb{P}[\exists v\in\on{Incomp}(c_0,c_1): \snorm{Av}_2\le\epsilon/\sqrt{n}]\\
&\le 2\exp(-cn) + \mb{P}[\exists v\in\on{Incomp}(c_0,c_1): \snorm{Av}_2\le\epsilon/\sqrt{n}].
\end{align*}
Let $A_1,\dots, A_n$ denote the rows of $A$, and note that, by symmetry,
\[Av = \sum_{i=1}^nv_iA_i^T.\]
In particular, 
\[\snorm{Av}_2 \ge |v_i|\on{dist}(A_i, H_i),\]
where $H_i$ is the span of the rows $A_j$ for $j\neq i$. Since $|v_i| \ge c_1/2\sqrt{n}$ for all $i \in \on{Spread}(v)$, it follows that if $\snorm{Av}_2 \le \epsilon/\sqrt{n}$ for some $v \in \on{Incomp}(c_0, c_1)$, then we must necessarily have
\[\on{dist}(A_i,H_i)\le\frac{\epsilon\sqrt{2}}{c_1}\]
for at least $c_{\ref{lem:spread}}n$ indices $i \in [n]$. Thus, we see that the probability that $s_n(A) \le \epsilon/\sqrt{n}$ is at most
\[2\exp(-cn) + \frac{1}{c_{\ref{lem:spread}}n}\sum_{i=1}^n\mb{P}\bigg[\on{dist}(A_i,H_i)\le\frac{\epsilon\sqrt{2}}{c_1}\bigg].\]
Therefore, for \cref{thm:main} it suffices to show that
\[\mb{P}[\on{dist}(A_1,H_1)\le\epsilon]\le C\epsilon^{1/8}+2\exp(-cn^{1/2}).\]
A direct computation (\cite[Proposition~5.1]{Ver14}) shows that %Now it suffices to check that
\[\on{dist}(A_1,H_1) = \frac{|\sang{(A')^{-1}X,X}-a_{11}|}{\sqrt{1+\snorm{(A')^{-1}X}_2^2}},\]
where $A'$ is the bottom right $(n-1)\times(n-1)$ block of $A$, and $X$ is the first column of $A$ with the top element removed. At this point, we can apply \cref{thm:quadratic-form-small-ball} to conclude. If $A$ has Rademacher entries, by continuity we  can transfer the singular value estimate to the model where the distribution is perturbed by a centered Gaussian with sufficiently small variance, at which point, an application of  \cref{thm:quadratic-form-small-ball-rademacher} allows us to conclude. 
\end{proof}

\bibliographystyle{amsplain0.bst}
\bibliography{main.bib}

\providecommand{\bysame}{\leavevmode\hbox to3em{\hrulefill}\thinspace}
\providecommand{\MR}{\relax\ifhmode\unskip\space\fi MR }
% \MRhref is called by the amsart/book/proc definition of \MR.
\providecommand{\MRhref}[2]{%
  \href{http://www.ams.org/mathscinet-getitem?mr=#1}{#2}
}
\providecommand{\href}[2]{#2}
\begin{thebibliography}{10}

\bibitem{balogh2018method}
J\'{o}zsef Balogh, Robert Morris, and Wojciech Samotij, \emph{The method of
  hypergraph containers}, Proceedings of the {I}nternational {C}ongress of
  {M}athematicians---{R}io de {J}aneiro 2018. {V}ol. {IV}. {I}nvited lectures,
  World Sci. Publ., Hackensack, NJ, 2018, pp.~3059--3092.

\bibitem{bourgain2010singularity}
Jean Bourgain, Van~H. Vu, and Philip~Matchett Wood, \emph{On the singularity
  probability of discrete random matrices}, Journal of Functional Analysis
  \textbf{258} (2010), 559--603.

\bibitem{campos2019singularity}
Marcelo Campos, Let{\'\i}cia Mattos, Robert Morris, and Natasha Morrison,
  \emph{On the singularity of random symmetric matrices}, Duke Mathematical
  Journal (2020), to appear.

\bibitem{CTV06}
Kevin~P. Costello, Terence Tao, and Van Vu, \emph{Random symmetric matrices are
  almost surely nonsingular}, Duke Math. J. \textbf{135} (2006), 395--413.

\bibitem{ferber2019singularity}
Asaf Ferber and Vishesh Jain, \emph{Singularity of random symmetric
  matrices—a combinatorial approach to improved bounds}, Forum of
  Mathematics, Sigma, vol.~7, Cambridge University Press, 2019.

\bibitem{FJLS2018}
Asaf Ferber, Vishesh Jain, Kyle Luh, and Wojciech Samotij, \emph{On the
  counting problem in inverse {L}ittlewood--{O}fford theory}, arXiv:1904.10425.

\bibitem{JSS20discrete2}
Vishesh Jain, Ashwin Sah, and Mehtaab Sawhney, \emph{Singularity of discrete
  random matrices {II}}, arXiv:2010.06554.

\bibitem{JSS20}
Vishesh Jain, Ashwin Sah, and Mehtaab Sawhney, \emph{The smallest singular
  value of dense random regular digraphs}, arXiv:2008.04755.

\bibitem{kahn1995probability}
Jeff Kahn, J{\'a}nos Koml{\'o}s, and Endre Szemer{\'e}di, \emph{On the
  probability that a random $\pm$1-matrix is singular}, Journal of the American
  Mathematical Society \textbf{8} (1995), 223--240.

\bibitem{komlos1967determinant}
J{\'a}nos Koml{\'o}s, \emph{On determinant of (0, 1) matrices}, Studia Science
  Mathematics Hungarica \textbf{2} (1967), 7--21.

\bibitem{Liv18}
Galyna~V Livshyts, \emph{The smallest singular value of heavy-tailed not
  necessarily iid random matrices via random rounding}, arXiv:1811.07038.

\bibitem{livshyts2019smallest}
Galyna~V Livshyts, Konstantin Tikhomirov, and Roman Vershynin, \emph{The
  smallest singular value of inhomogeneous square random matrices},
  arXiv:1909.04219.

\bibitem{lopatto2019tail}
Patrick Lopatto and Kyle Luh, \emph{Tail bounds for gaps between eigenvalues of
  sparse random matrices}, arXiv:1901.05948.

\bibitem{luh2018sparse}
Kyle Luh and Van Vu, \emph{Sparse random matrices have simple spectrum},
  arXiv:1802.03662.

\bibitem{MR80}
A.~L. Miro\v{s}nikov and B.~A. Rogozin, \emph{Inequalities for concentration
  functions}, Teor. Veroyatnost. i Primenen. \textbf{25} (1980), 178--183.

\bibitem{nguyen2017random}
Hoi Nguyen, Terence Tao, and Van Vu, \emph{Random matrices: tail bounds for
  gaps between eigenvalues}, Probability Theory and Related Fields \textbf{167}
  (2017), 777--816.

\bibitem{Ngu12}
Hoi~H. Nguyen, \emph{Inverse {L}ittlewood-{O}fford problems and the singularity
  of random symmetric matrices}, Duke Math. J. \textbf{161} (2012), 545--586.

\bibitem{nguyen2012least}
Hoi~H. Nguyen, \emph{On the least singular value of random symmetric matrices},
  Electron. J. Probab. \textbf{17} (2012), no. 53, 19.

\bibitem{o2016conjecture}
Sean O'Rourke and Behrouz Touri, \emph{On a conjecture of {G}odsil concerning
  controllable random graphs}, SIAM Journal on Control and Optimization
  \textbf{54} (2016), 3347--3378.

\bibitem{Rog61}
B.~A. Rogozin, \emph{On the increase of dispersion of sums of independent
  random variables}, Teor. Verojatnost. i Primenen \textbf{6} (1961), 106--108.

\bibitem{rudelson2008littlewood}
Mark Rudelson and Roman Vershynin, \emph{The {L}ittlewood--{O}fford problem and
  invertibility of random matrices}, Advances in Mathematics \textbf{218}
  (2008), 600--633.

\bibitem{tao2006random}
Terence Tao and Van Vu, \emph{On random$\pm$1 matrices: singularity and
  determinant}, Random Structures \& Algorithms \textbf{28} (2006), 1--23.

\bibitem{tao2007singularity}
Terence Tao and Van~H. Vu, \emph{On the singularity probability of random
  {B}ernoulli matrices}, Journal of the American Mathematical Society
  \textbf{20} (2007), 603--628.

\bibitem{Tik20}
Konstantin Tikhomirov, \emph{Singularity of random {B}ernoulli matrices}, Ann.
  of Math. (2) \textbf{191} (2020), 593--634.

\bibitem{Ver14}
Roman Vershynin, \emph{Invertibility of symmetric random matrices}, Random
  Structures Algorithms \textbf{44} (2014), 135--182.

\bibitem{wei2017investigate}
Feng Wei, \emph{Investigate invertibility of sparse symmetric matrix},
  arXiv:1712.04341.

\end{thebibliography}

\appendix
\section{Quadratic small-ball probabilities }\label{app:vershynin}
The purpose of this appendix is to prove \cref{thm:quadratic-form-small-ball} for completeness. We will also briefly note the necessary modifications to deduce \cref{thm:quadratic-form-small-ball-rademacher}. 

We essentially replicate the argument in \cite[Section~8]{Ver14} with the obvious modifications. In brief, our improved anticoncentration estimate \cref{prop:anticoncentration} will allow us to replace the $\epsilon^{1/8+\eta}$ dependence in \cite{Ver14} with $\epsilon^{1/8}$, and the improved range of arithmetic structure derived in \cref{thm:unstructured-kernel} will allow us to achieve an error term of $\exp(-\Omega(\sqrt{n}))$. For the sake of simplicity, we define the event
\[\mc{E}_K := \{\snorm{A}\le K\sqrt{n}\}.\]

\begin{proposition}[Analogue of {\cite[Proposition~8.2]{Ver14}}]\label{prop:denominator-size}
Let $A$ be a symmetric random matrix whose independent entries are identical copies of a sub-Gaussian random variable with mean $0$ and variance $1$. Let $X$ be a random vector (independent of $A$) whose coordinates are i.i.d.~copies of $\xi$. There exist constants $C, c > 0$ depending only on the sub-Gaussian moment of $\xi$ for which the following holds. 

For $\lambda\in(C/n,1/\sqrt{n})$, $A$ satisfies the following with probability at least $1-2e^{-cn}$:  if $\mc{E}_K$ holds, then for every $\epsilon > 0$:
\begin{itemize}
    \item $\snorm{A^{-1}X}_2\ge c$ with probability at least $1-e^{-cn}$ in the randomness of $X$.
    \item $\snorm{A^{-1}X}_2\le\epsilon^{-1/2}\snorm{A^{-1}}_{\on{HS}}$ with probability at least $1-\epsilon$ in the randomness of $X$.
    \item $\snorm{A^{-1}X}_2\ge\epsilon\snorm{A^{-1}}_{\on{HS}}$ with probability at least $1-C\epsilon-2e^{-c\lambda n}$ in the randomness of $X$.
 \end{itemize}
\end{proposition}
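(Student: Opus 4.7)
Parts (i) and (ii) require no structural input on $A$ beyond $\mc{E}_K$. For (i), the deterministic bound $\|A^{-1}X\|_2 \ge \|X\|_2/\|A\|$ combined with $\|A\| \le K\sqrt{n}$ on $\mc{E}_K$ reduces the claim to the standard sub-Gaussian concentration of $\|X\|_2^2$ around its mean $n$, yielding $\|X\|_2 \ge c\sqrt{n}$ with probability $\ge 1-e^{-cn}$ over $X$. For (ii), Markov's inequality applied to the identity $\mb{E}_X\|A^{-1}X\|_2^2 = \on{tr}(A^{-2}) = \|A^{-1}\|_{\on{HS}}^2$ (using $\mb{E}[X_iX_j]=\delta_{ij}$) immediately gives the claim.

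Part (iii) is the main content and is where structural hypotheses on $A$ are needed. Using the spectral decomposition $A=\sum_i\mu_i v_iv_i^{\intercal}$ with orthonormal eigenvectors, set $Y_i=\langle v_i,X\rangle$ and $p_i=\mu_i^{-2}/\|A^{-1}\|_{\on{HS}}^2$, so that $\|A^{-1}X\|_2^2/\|A^{-1}\|_{\on{HS}}^2=\sum_i p_iY_i^2$ and $\sum_i p_i=1$. The plan has two ingredients. First, I isolate a favorable $A$-event of probability $\ge 1-2e^{-c'n}$ on which every eigenvector $v_i$ is incompressible with $\wh{MD}_L(v_i,\lambda)\ge 2^{\lambda n/C}$: apply \cref{thm:unstructured-kernel} with $u$ taken from a sufficiently fine $\delta$-net of $\mb{S}^{n-1}$ (of size $(3/\delta)^n$), take a union bound, and use the relation $Av_i\parallel v_i$ together with a stability argument to pair each eigenvector with a nearby net point. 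Second, for every such $v_i$, the estimate \cref{prop:anticoncentration} applied with $J=\on{Spread}_\lambda(v_i)$ gives $\mc{L}(Y_i,t)\lesssim L(t+2^{-c\lambda n})$ for all $t\ge 0$.

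It then remains to deduce $\mb{P}[\sum_i p_iY_i^2<\epsilon^2]\le C\epsilon+2e^{-c\lambda n}$ from these per-coordinate anticoncentration bounds. When one eigenvalue dominates ($\max_i p_i=\Omega(1)$), the inequality $\|A^{-1}X\|_2^2\ge p_{i^*}\|A^{-1}\|_{\on{HS}}^2 Y_{i^*}^2$ reduces the problem to a single anticoncentration estimate for $Y_{i^*}$ at scale $\Theta(\epsilon)$, directly yielding the desired bound. The main obstacle lies in the complementary regime, where the spectrum of $A^{-1}$ is spread out: naive pigeonhole on $p_{i^*}\ge 1/n$ yields only the weaker threshold $|Y_{i^*}|\ge\sqrt{n}\epsilon$, giving a bound of order $\sqrt{n}\epsilon$ rather than $\epsilon$. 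In that regime I would combine the anticoncentration of many $Y_i$'s — exploiting the orthogonality of the $v_i$ together with \cref{lem:miroshnikov-rogozin} — to obtain a chi-square-style concentration of $\sum_i p_iY_i^2$ around its mean $1$. The improved $\sqrt{\lambda}$-in-place-of-$\sqrt{|J|/n}$ gain in \cref{prop:anticoncentration} is precisely what enables the $2e^{-c\lambda n}$ error term, and the overall case split follows the template of \cite[Proposition~8.2]{Ver14}.
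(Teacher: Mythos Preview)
Your treatment of (i) and (ii) is correct and matches the standard arguments.

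For (iii), however, there is a genuine gap in your plan for the good $A$-event. Netting over $u \in \mb{S}^{n-1}$ and union-bounding with \cref{thm:unstructured-kernel} does not work: the net has size $(3/\delta)^n$, so the union bound against the $2e^{-cn}$ probability forces $\delta$ to be a constant bounded away from zero. But then your stability step fails, because the relation $Av \parallel u$ is not stable under perturbing $u$. An eigenvector $v_i$ satisfies $Av_i \parallel v_i$, and for a nearby net point $u$ the line $A^{-1}(\on{span}(u))$ can be far from $v_i$: indeed $\snorm{A^{-1}(u-v_i)}_2 \le \snorm{A^{-1}}\cdot\delta$, which will dominate $\snorm{A^{-1}v_i}_2 = |\mu_i|^{-1}$ unless $|\mu_i|$ is already essentially minimal. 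The workable route---which is what the paper's one-line sketch is pointing to---is to net over the \emph{one-dimensional} eigenvalue parameter $\mu \in [-K\sqrt{n}, K\sqrt{n}]$ instead of over directions in $\mb{S}^{n-1}$: for each $\mu$ in a net of only polynomial size one bounds $\mb{P}[\exists \text{ bad } v : \snorm{(A-\mu I)v}_2 \text{ small}]$ directly via \cref{lem:small-ball} (the diagonal shift by $\mu$ leaves the off-diagonal randomness used there untouched), and then union-bounds over $\mu$.

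A second, smaller issue: in the spread-spectrum regime you propose to combine the $Y_i$ via \cref{lem:miroshnikov-rogozin}, but that inequality requires independent summands, whereas the projections $Y_i = \sang{v_i,X}$ are only uncorrelated. Orthogonality of the $v_i$ does not make the $Y_i$ (let alone the $Y_i^2$) independent, so the lemma is not applicable. The standard device in this regime is a quadratic-form concentration inequality (e.g.\ Hanson--Wright) for $\sum_i p_i Y_i^2 = X^\intercal(A^{-2}/\snorm{A^{-1}}_{\on{HS}}^2)X$, which gives a strong lower bound precisely when $\max_i p_i$ is small and, notably, requires no structural input on $A$ at all.
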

\begin{proof}
The first two parts have the same proof as in \cite[Proposition~8.2]{Ver14}. The last part also has essentially the same proof, except that we use \cref{prop:anticoncentration} in place of \cite[Proposition~6.9]{Ver14} and use \cref{thm:unstructured-kernel} in place of \cite[Theorem~7.1]{Ver14}.
\end{proof}
\begin{remark}
For \cref{thm:quadratic-form-small-ball-rademacher}, we note that if $A$ has entries which are Rademacher plus a centered Gaussian with sufficiently small variance, we can prove the same statement with $n^{-2/3} < \lambda < c(\log n)^{1/4}n^{-1/2}$. We use \cref{thm:threshold-kernel} instead of \cref{thm:unstructured-kernel} and the analogue of \cref{prop:anticoncentration} for the threshold. The remaining part of the proof is exactly the same.
\end{remark}

Next, we require the following decoupling lemma from \cite{Ver14}; this use of decoupling to establish singularity for symmetric random matrices originates in work of Costello, Tao, and Vu \cite{CTV06}, and has been used in essentially all follow-up works.

\begin{lemma}[{\cite[Lemma~8.4]{Ver14}}]\label{lem:decoupling-quadratic-forms}
Let $G$ be an arbitrary symmetric $n\times n$ matrix, and let $X,X'$ be independent samples of a random vector in $\mb{R}^n$ with independent coordinates. Let $J\subseteq[n]$. Then, for every $\epsilon\ge 0$ we have
\[\mc{L}(\sang{GX,X},\epsilon)^2\le\mb{P}_{X,X'}[|\sang{GP_{J^c}(X-X'),P_JX}-v|\le\epsilon]\]
for some random variable $v$ determined by $G|_{J^c\times J^c}$ and $P_{J^c}X, P_{J^c}X'$.
\end{lemma}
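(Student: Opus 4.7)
The plan is to apply the standard decoupling (replica) trick originating with Costello--Tao--Vu, which splits the randomness of $X$ along $J$ and $J^{c}$ and inserts an independent replica via Cauchy--Schwarz. Write $Y = P_{J}X$ and $Z = P_{J^{c}}X$, and use the symmetry of $G$ to expand
\[\sang{GX,X} = Y^{T} G|_{J \times J} Y + 2 Y^{T} G|_{J \times J^{c}} Z + Z^{T} G|_{J^{c} \times J^{c}} Z.\]

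Next, let $u_{0}$ be a value attaining $\mc{L}(\sang{GX,X}, \epsilon)$, and set $E = \{|\sang{GX,X} - u_{0}| \le \epsilon\}$. The key step is to condition on $Y$ and apply Cauchy--Schwarz to obtain
\[\mc{L}(\sang{GX,X},\epsilon)^{2} = \bigl(\mb{E}_{Y}\mb{E}_{Z}[\mbm{1}_{E}\mid Y]\bigr)^{2} \le \mb{E}_{Y}\bigl(\mb{E}_{Z}[\mbm{1}_{E}\mid Y]\bigr)^{2} = \mb{E}_{Y}\mb{E}_{Z,Z'}[\mbm{1}_{E}\,\mbm{1}_{E'}\mid Y],\]
where $E'$ is the analogue of $E$ with $Z$ replaced by an independent copy $Z' := P_{J^{c}} X'$ (keeping the same $Y$). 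The choice of coupling is essentially forced: we want the resulting bound to be bilinear in $Y$ and in $Z - Z'$, which requires keeping $Y$ the same across the two replicas.

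The last step is a direct algebraic manipulation. On $E \cap E'$, the triangle inequality yields $|\sang{GX,X} - \sang{G\tilde{X},\tilde{X}}| \le 2\epsilon$, where $\tilde{X} = Y + Z'$. Expanding both sides via the identity above and subtracting, the diagonal piece $Y^{T} G|_{J \times J} Y$ cancels, leaving
\[\sang{GX,X} - \sang{G\tilde{X},\tilde{X}} = 2\sang{G P_{J^{c}}(X - X'), P_{J} X} + w,\]
where $w = Z^{T} G|_{J^{c} \times J^{c}} Z - (Z')^{T} G|_{J^{c} \times J^{c}} Z'$ depends only on $G|_{J^{c} \times J^{c}}$ and on $Z, Z' = P_{J^{c}} X, P_{J^{c}} X'$. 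Setting $v = -w/2$, the inequality $|2\sang{G P_{J^{c}}(X-X'), P_{J}X} + w| \le 2\epsilon$ becomes $|\sang{G P_{J^{c}}(X-X'), P_{J}X} - v| \le \epsilon$ on $E \cap E'$. Since neither the event nor the random variable $v$ depends on $P_{J} X'$, marginalizing over that coordinate recovers the probability $\mb{P}_{X,X'}[\cdot]$ in the lemma.

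There is no real obstacle here; this is a short computation whose only delicate point is selecting the coupling so that the quadratic-in-$Y$ diagonal term cancels, leaving an expression linear in $Y$ and in the decoupled difference $Z - Z'$ (with the remaining $Z$-only quadratic absorbed into $v$).
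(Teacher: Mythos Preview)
Your argument is correct and is exactly the standard decoupling computation; the paper does not supply its own proof but simply cites \cite[Lemma~8.4]{Ver14}, whose proof proceeds in the same way. The only minor technicality is that the supremum defining $\mc{L}(\sang{GX,X},\epsilon)$ need not be attained, but this is handled by choosing $u_0$ within $\delta$ of the supremum and letting $\delta\to 0$ at the end.
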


%We are now in position to complete the proof of \cref{thm:quadratic-form-small-ball}. We follow the outline given by \cite{Ver14}.

We can now prove \cref{thm:quadratic-form-small-ball}; we refer the reader to \cite{Ver14} for a more detailed exposition. 

\begin{proof}[Proof of \cref{thm:quadratic-form-small-ball}]
We randomly choose $J\subseteq[n]$ by sampling elements independently with probability $1-c_{\ref{lem:spread}}/2$. We trivially see by the Chernoff bound that if $\mc{E}_J = \{|J^c|\le c_{\ref{lem:spread}}n\}$, then
\[\mb{P}[\mc{E}_J]\ge 1-2e^{-cn}.\]
For $J$ satisfying $\mc{E}_J$, let us assign the set $\on{Spread}(v)$ for $v\in\on{Incomp}(c_0,c_1)$ in a way such that $\on{Spread}(x)\subseteq |J|$. We can do this since, in \cref{lem:spread}, we chose $c_{\ref{lem:spread}}$ so as to have at least $2c_{\ref{lem:spread}}n$ spread coordinates. We will then use this assignment to obtain the median regularized LCDs that are used.

Next, consider the event $\mc{E}_D$ given by
\[\epsilon_0^{1/2}\sqrt{1+\snorm{A^{-1}X}_2^2}\le\snorm{A^{-1}}_{\on{HS}}\le\frac{1}{\epsilon_0}\snorm{A^{-1}P_{J^c}(X-X')}_2.\]
Applying \cref{prop:denominator-size} to $X$ and $Y_i = \delta_i(X_i-X_i')$, where $\delta_i$ is the indicator of $i\in J^c$, and adjusting constants appropriately, we find that
\[\mb{P}_{J,A,X,X'}[\mc{E}_D\vee\mc{E}_K^c]\ge 1 - C\epsilon_0-2e^{-c\lambda n}-2e^{-cn},\]
where the constants depend only on the sub-Gaussian norm of $\xi$. Now define
\[x_0 = \frac{A^{-1}P_{J^c}(X-X')}{\snorm{A^{-1}P_{J^c}(X-X')}_2},\]
which is a random vector. If the denominator is $0$, we can use an arbitrary fixed vector. Let $\mc{E}_U$ be the event (analogue of \cite[Equation~8.11]{Ver14}) that 
\[x_0\in\on{Incomp}(c_0,c_1),\qquad \wh{MD}_L(x_0,\lambda)\ge 2^{\lambda n/C},\]
where we choose $L$ as in \cref{thm:unstructured-kernel}.

Now condition on some $J$ satisfying $\mc{E}_J$ and some $X,X'$. By \cref{thm:unstructured-kernel}, we deduce that
\[\mb{P}_A[\mc{E}_U\vee\mc{E}_K^c|X,X',J]\ge 1-2e^{-cn}.\]
Thus (analogue of \cite[Equation~8.12]{Ver14})
\[\mb{P}_{J,A,X,X'}[(\mc{E}_J\wedge\mc{E}_D\wedge\mc{E}_U)\vee\mc{E}_K^c]\ge 1-p_0,\]
where $p_0 = C\max(\epsilon_0,2^{-\lambda n/C})$. Hence, there is a realization of $J$ such that $\mc{E}_J$ holds and
\[\mb{P}_{A,X,X'}[(\mc{E}_D\wedge\mc{E}_U)\vee\mc{E}_K^c]\ge 1-p_0.\]
We fix this choice of $J$ for the remainder of the proof. Now let $\mc{E}_A$ be the event, dependent only on $A$, that simultaneously $\mc{E}_K$ and
\[\mb{P}_{X,X'}[\mc{E}_D\wedge\mc{E}_U|A]\ge 1-p_0^{1/2}.\]
By Fubini's theorem, Markov's inequality, and the fact that $\mc{E}_K$ depends only on $A$, we see from the above that (analogue of \cite[Equation~8.13]{Ver14})
\[\mb{P}_A[\mc{E}_A\vee\mc{E}_K^c]\ge 1-p_0^{1/2}.\]

Now, if $\mc{E}$ is the desired event
\[\frac{|\sang{A^{-1}X,X}-u|}{\sqrt{1+\snorm{A^{-1}X}_2^2}}\le\epsilon,\]
then
\begin{align*}
\mb{P}_{A,X}[\mc{E}]\le \mb{P}[\mc{E}_K^c] + p_0^{1/2} + \sup_{A\in\mc{E}_A}\mb{P}_X[\mc{E}|A].
\end{align*}
Fix some $A\in\mc{E}_A$ for the remainder of the proof. We need to bound
\[\mb{P}_X[\mc{E}|A]\le\mb{P}_{X,X'}[\mc{E}\wedge\mc{E}_D|A] + p_0^{1/2}.\]
Using $\mc{E}_D$ along with $\mc{E}$, we see that
\[\mb{P}_{X,X'}[\mc{E}\wedge\mc{E}_D|A]\le\mb{P}_{X,X'}[|\sang{A^{-1}X,X}-u|\le\epsilon\epsilon_0^{-1/2}\snorm{A^{-1}}_{\on{HS}}|A] =: p_1.\]
Then by \cref{lem:decoupling-quadratic-forms}, we find that this satisfies
\[p_1^2\le\mb{P}_{X,X'}[|\sang{A^{-1}P_{J^c}(X-X'),P_JX}-v|\le\epsilon\epsilon_0^{-1/2}\snorm{A^{-1}}_{\on{HS}}|A],\]
where $v = v(A^{-1},P_{J^c}X,P_{J^c}X')$ is some random variable depending on these parameters only. This last probability is at most
\[p_0^{1/2} + \mb{P}_{X,X'}[|\sang{A^{-1}P_{J^c}(X-X'),P_JX}-v|\le\epsilon\epsilon_0^{-1/2}\snorm{A^{-1}}_{\on{HS}}\wedge\mc{E}_D\wedge\mc{E}_U|A].\]
Now by using $\mc{E}_D$ again, and dividing the inequality in question by $\snorm{A^{-1}P_{J^c}(X-X')}_2$, we see that (analogue of \cite[Equation~8.15]{Ver14})
\[p_1^2\le p_0^{1/2} + \mb{P}_{X,X'}[|\sang{x_0,P_JX}-w|\le\epsilon_0^{-3/2}\epsilon\wedge\mc{E}_U|A].\]
Finally, we can apply \cref{prop:anticoncentration} to this random variable. Note that $x_0,w$ do not depend on $P_JX$. Also, we know from $\mc{E}_U$ that $\wh{MD}_L(x_0,\lambda)\ge 2^{\lambda n/C}$. It suffices to check that $\on{Spread}_\lambda(x_0)\subseteq J$ by the definitions chosen at the beginning; hence, we can drop the randomness in $P_JX$ of all coordinates except for those in the spread set and apply the result. We again technically need to check that $\on{Spread}_\lambda(x_0)$ satisfies the conditions on \cref{prop:anticoncentration}, which we have already implicitly verified before. Overall, we deduce
\[\mb{P}_{X,X'}[|\sang{x_0,P_JX}-w|\le\epsilon_0^{-3/2}\epsilon\wedge\mc{E}_U|A]\le C\epsilon_0^{-3/2}\epsilon + 2^{-\lambda n/C}.\]

Finally, tracing it all back, we have
\[\mb{P}[\mc{E}]\le\mb{P}[\mc{E}_K^c] + 2p_0^{1/2} + p_1\le 2e^{-cn}+C\epsilon_0^{1/2} + C\epsilon_0^{1/4} + C\epsilon_0^{-3/4}\epsilon^{1/2} + 2^{-\lambda n/C}\]
by sub-Gaussian concentration of the operator norm of $A$ (\cite[Lemma~2.3]{Ver14}). Now choosing $\epsilon_0 = \epsilon^{1/2}$ and $\lambda = 1/\sqrt{n}$, which is the biggest permitted by \cref{thm:unstructured-kernel}, we are done.
\end{proof}

\end{document}